\documentclass[smallcondensed]{svjour3}
\smartqed

\usepackage{latexsym}
\usepackage{amsmath}
\usepackage{amssymb}
\usepackage{amsfonts}
\usepackage{verbatim}
\usepackage{graphicx}
\usepackage{epstopdf}
\usepackage{subfigure}
\usepackage{float}
\usepackage{multirow}
\usepackage{color}
\usepackage[misc]{ifsym}

\usepackage[backref,colorlinks,linkcolor=red,anchorcolor=green,citecolor=blue]{hyperref}
\newtheorem{assump}{Assumption}

\renewcommand{\theequation}{\arabic{section}.\arabic{equation}}
\let \ssection=\section
\renewcommand{\section}{\setcounter{equation}{0}\ssection}
\setlength{\textwidth}{5.5in}

\DeclareMathOperator*{\esssup}{ess\,sup}

\newcommand{\<}{\langle}
\renewcommand{\>}{\rangle}
\newcommand{\nn}{\nonumber}

\journalname{Journal of Scientific Computing}
\date{\today}

\begin{document}


  \title{On Efficient Second Order Stabilized Semi-Implicit
    Schemes for the Cahn-Hilliard Phase-Field Equation}
   \titlerunning{Efficient Second Order Stabilized Semi-Implicit
   	Schemes for Cahn-Hilliard Equation}


%

\author{Lin Wang \and Haijun Yu}
 \institute{L. Wang\and H. Yu (\Letter)\\
 	\email{wanglin@lsec.cc.ac.cn} (L. Wang),
 	\email{hyu@lsec.cc.ac.cn} (H. Yu)\\
     School of Mathematical Sciences,
     University of Chinese Academy of Sciences, Beijing
     100049, China.\\
 	NCMIS \& LSEC, Institute of
 	Computational Mathematics and Scientific/Engineering
 	Computing, Academy of Mathematics and Systems
 	Science, Beijing 100190, China.\\
 	}

\maketitle

\begin{abstract}
  Efficient and energy stable high order time marching
  schemes are very important but not easy to construct for
  the study of nonlinear phase dynamics. In this paper, we
  propose and study two linearly stabilized second order semi-implicit
  schemes for the Cahn-Hilliard phase-field equation. 
  One uses backward differentiation formula and the other
  uses Crank-Nicolson method to discretize linear
  terms. In both schemes, the nonlinear bulk forces are
  treated explicitly with two second-order stabilization terms. This treatment leads to linear elliptic systems with
  constant coefficients, for which lots of robust and efficient solvers are available. The discrete energy dissipation properties are  proved for both schemes. Rigorous error analysis is carried out to show that, when the time step-size is small enough, second
  order accuracy in time is obtained with a prefactor
  controlled by a fixed power of $1/\varepsilon$, where
  $\varepsilon$ is the characteristic interface
  thickness. Numerical results are presented to verify the
  accuracy and efficiency of proposed schemes.
  \keywords{phase field model \and
  	Cahn-Hilliard equation \and
  	energy stable \and
  	stabilized semi-implicit scheme \and
  	second order time marching}
  \subclass{65M12 \and 65M15 \and  65P40}
\end{abstract}
%



\section{Introduction}

In this paper, we consider numerical approximation for the
Cahn-Hilliard equation
\begin{equation}\label{eq:CH}
  \begin{cases}
    \phi_{t}=\gamma\Delta \mu,   & (x,t)\in \Omega \times (0,T],\\
    \mu=-\varepsilon \Delta \phi + \dfrac{1}{\varepsilon}f(\phi),  &(x,t)\in \Omega \times (0,T],\\
    \phi |_{t=0} =\phi_{0}(x), &x \in \Omega,
  \end{cases}
\end{equation}
with Neumann boundary condition
\begin{equation}\label{eq:CH:nbc}
	\partial_{n}\phi =0, \quad \partial_{n}\mu =0, \quad
x\in \partial\Omega.
\end{equation} 
Here $\Omega \in R^{d}, d=2,3$ is a bounded domain with a
locally Lipschitz boundary, $n$ is the outward normal, $T$
is a given time, $\phi(x,t)$ is the phase-field variable.
Function $f(\phi)=F'(\phi)$, and $F(\phi)$ is a given
energy potential with two local minima, e.g. the double well
potential $F(\phi)=\frac{1}{4}(\phi^2-1)^2$. The two minima
of $F$ produce two phases, with the typical thickness of the
interface between two phases given by
$\varepsilon$. $\gamma$, called mobility, is related to the
characteristic relaxation time of the system.

The Cahn-Hilliard equation originated from the work by Cahn
and Hilliard ~\cite{cahn_free_1958}, in which a diffusive model of interfacial energy is built to describe the phase
separation and coarsening phenomena in non-uniform systems. If the term
$\Delta \mu$ in equation \eqref{eq:CH} is replaced with
$-\mu$, one get the Allen-Cahn equation, which was
introduced by Allen and Cahn~\cite{allen_microscopic_1979}
to describe the motion of anti-phase boundaries in
crystalline solids.  The Cahn-Hilliard equation and the
Allen-Cahn equation are two widely used phase-field
models. In a phase-field model, the information of interface
is implicitly determined by a smooth phase function $\phi$. In most parts
of the domain $\Omega$, the value of $\phi$ is close to
local minima of $F$. The interface is a thin layer of
thickness $\varepsilon$ connecting regions of different
local minima.  It is easier to deal with dynamical process
involving morphology changes of interfaces using phase-field
models due to the good mathematical properties that
phase-field equations have. For this reason, phase field
models have been the subject of many theoretical and
numerical investigations for several decades(cf., for
instance, \cite{du_numerical_1991},
\cite{elliott_error_1992}, \cite{chen_spectrum_1994},
\cite{caffarelli_l_1995}, \cite{elliott_cahnhilliard_1996},
\cite{eyre_unconditionally_1998},
\cite{furihata_stable_2001}, \cite{liu_phase_2003},
\cite{feng_error_2004}, \cite{kessler_posteriori_2004},
\cite{shen_numerical_2010}, \cite{condette_spectral_2011}).

But, numerically solving the phase-field equations is not an
easy task, especially for the Cahn-Hilliard equation. Firstly, the small parameter $\varepsilon$
requires very high spatial and temporal grid
resolutions. Secondly, the small parameter $\varepsilon$ in
the nonlinear bulk force $f(\phi)$ and the bi-harmonic
operator makes the equation very stiff, which make it very hard to solve.  
Nevertheless,
lots of numerical schemes have been proposed to solve the
Cahn-Hilliard equation based on its mathematical
properties.  Two most important properties of
the Cahn-Hilliard equation are the volume conservation property 
\begin{equation}\label{eq:CH:vol}
\int_{\Omega} \phi(x,t)dx = \int_{\Omega} \phi_0(x)dx,
\quad \forall\,t>0,
\end{equation}
and energy dissipation property
\begin{equation}\label{eq:CH:Edis}
E_{\varepsilon}(\phi(\cdot,t)) - E_{\varepsilon}(\phi_0)
= - \gamma\int_0^t\!\!\int_{\Omega}
|\nabla\mu|^{2}dx
= - \gamma\int_0^t
\|\phi_t\|_{-1}^{2}dx,\quad \forall\,t>0,
\end{equation}
where $E_\varepsilon$ is the free energy functional defined as
\begin{equation}\label{eq:CH:E}
  E_{\varepsilon}(\phi):=\int_{\Omega}\Big(\frac{\varepsilon}{2}|\nabla \phi|^{2} +\frac{1}{\varepsilon} F(\phi)\Big)dx.
\end{equation}
Here the $H^{-1}$ norm $\|\cdot\|_{-1}$ is defined in Section 2. 
Since the nonlinear bulk energy $F$ is neither a convex nor a
concave function, treating it fully explicit or implicit in
a time discretization will not lead to an efficient
scheme. In fact, if the nonlinear force $f$ is treated fully
explicitly, the resulting scheme will require a very tiny
step-size to be stable (cf. for instance
\cite{shen_numerical_2010}). On the other hand, treating it
fully implicitly will lead to a nonlinear system, for which
the solution existence and uniqueness requires a restriction
on step-size as well (cf. e.g. \cite{feng_error_2004}). One
popular approach to solve this dilemma is the convex
splitting method, which appears to be introduced by Elliott and Stuart~\cite{elliott_global_1993} and popularized by
Eyre~\cite{eyre_unconditionally_1998}. In a convex splitting approach, the convex
part of $F$ is treated implicitly and the concave part
treated explicitly. The convex splitting scheme given in \cite{elliott_global_1993}\cite{eyre_unconditionally_1998} is of first order accurate
and unconditional stable. In each time step, one need solve
a nonlinear system. The solution existence and uniqueness is
guaranteed since the nonlinear system corresponds to a
convex optimization problem. The convex splitting method was
used widely, and several second order extensions were
derived in different situations
\cite{condette_spectral_2011,baskaran_energy_2013,chen_linear_2014,guo_h2_2016}.  Another type of
energy stable schemes is the secant-line method
proposed by Du and Nicolaides~\cite{du_numerical_1991}. It is also used and
extended in several other works,
e.g. \cite{furihata_stable_2001,kim_conservative_2004,feng_fully_2006,condette_spectral_2011,gomez_provably_2011,baskaran_energy_2013,zhang_adaptive_2013,benesova_implicit_2014}. Like the fully implicit method,
the usual second order convex splitting method and the
secant-type method for Cahn-Hilliard equation need a small
time step-size to guarantee the semi-discretized nonlinear
system has a unique solution (cf. for instance
\cite{du_numerical_1991,barrett_finite_1999}). To remove
the restriction on time step-size, diffusive three-step
Crank-Nicolson schemes coupled with a second order
convex splitting were introduced in \cite{guo_h2_2016}
and \cite{diegel_stability_2016}, In which, the time semi-discretized system is a nonlinear but unique solvable problem.

Recently, a new approach termed as invariant energy
quadratization (IEQ) was introduced to handle the nonlinear
energy. When applying to Cahn-Hilliard equation, it first
appeared in
\cite{guillen-gonzalez_linear_2013,guillen-gonzalez_second_2014}
as a Lagrange multiplier method. It then generalized by Yang
et al. and successfully extended to handle several very
complicated nonlinear phase-field models
\cite{yang_linear_2016,han_numerical_2017,yang_efficient_2017,yang_yu_efficient_2017}. In
the IEQ approach, a new variable which equals to the square
root of $F$ is introduced, so the energy is written into a
quadratic form in terms of the new variable. By using
semi-implicit treatments to all the nonlinear terms in the
equations, one get a linear and energy stable scheme. It is
straightforward to prove the unconditional stability for
both first order and second order IEQ schemes. Comparing to
the convex splitting approach, IEQ leads to well-structured
linear system which is easier to solve. The modified energy
in IEQ is an order-consistent approximation to the original
system energy. At each time step, it needs to solve a linear
system with time-varying coefficients. To avoid the variable-coefficient system, a new approach called scalar auxiliary variable (SAV) was introduced by Shen et al.\cite{shen_scalar_2017,shen_new_2017} recently. The methodology
of SAV is very similar to IEQ, but at each time step, only systems with constant coefficients need to be solved.

Another trend of improving numerical schemes for phase-field
models focuses on algorithm efficiency. Chen and Shen \cite{chen_applications_1998} and Zhu et al.
\cite{zhu_coarsening_1999} studied
stabilized semi-implicit Fourier-spectral method for
Cahn-Hilliard equation. The space variables are discretized
using a Fourier-spectral method whose convergence rate is
exponential in contrast to the second order of a
usual finite-difference method. The time variable is
discretized by using semi-implicit schemes which allow much
larger time step sizes than explicit schemes. Xu and Tang
\cite{xu_stability_2006} introduced a different stabilized
term to build large time-stepping stabilized semi-implicit
method for a 2-dimensional epitaxial growth model. He et al
\cite{he_large_2007} proposed a similar large time-stepping
methods for the Cahn-Hilliard equation, in which a
stabilized term $B(\phi^{n+1}-\phi^n)$
(resp. $B(\phi^{n+1}-2\phi^n+\phi^{n-1})$) is added to the
nonlinear bulk force for the first order(resp. second order)
scheme. Shen and Yang applied similar stabilization skill to
Allen-Cahn equation and Cahn-Hilliard equation in mixed
formulation \cite{shen_numerical_2010}, which leads to
unconditionally energy stable first-order linear schemes and
second-order linear schemes with reasonable stability
conditions. This idea was followed up in
\cite{feng_stabilized_2013} for the stabilized
Crank-Nicolson schemes for phase field models.  Another
stabilized second-order Crank-Nicolson scheme with a new
convex-concave splitting of the energy is proposed for a
tumor-growth system by Wu et al.\cite{wu_stabilized_2014}.
Those time marching schemes all lead to linear systems,
which are easier to solve than nonlinear systems resulting
from traditional convex-splitting schemes, in which the
nonlinear convex force is treated implicitly.  On the other
hand, when the nonlinear force is treated explicitly, one
need to introduce a proper stabilization term and a suitably
truncated nonlinear function $\tilde{f}(\phi)$ instead of
$f(\phi)$ to prove the unconditionally energy stable
property with a reasonable stabilization constant. It is
worth to mention that with no truncation made to $f(\phi)$,
Li et al \cite{li_characterizing_2016,li_second_2017}
proved that the energy stable property can be obtained as
well, but a much larger stability constant need be used.
The stabilization skill has also been used in constructing 
higher order schemes, for example the exponential time differencing (ETD) scheme~\cite{ju_fast_205} and Runge-Kutta scheme~\cite{guo_efficient_2016,shin_unconditionally_2017}.

In this paper, we study the stability and convergence properties of two new second-order semi-implicit time marching
schemes.  One uses second-order backward
differentiation formula (BDF2)  and other one uses Crank-Nicolson approximation. In both schemes, explicit extrapolation are
used for the nonlinear force with two order-consistent extra stabilization
terms added to
guarantee the energy dissipation. We also give an optimal error analysis
in $l^{\infty}(0,T;H^{-1})\cap l^{2}(0,T;H^{1})$ norm.  The
new methods have several merits: 1) They are second order
accurate; 2) They lead to linear systems with constant
coefficients after time discretization; 3) Discrete energy dissipations are proved. The proofs base on Galerkin formulation. Both finite element method and spectral method can be used for spatial
discretization to conserve volume fraction and satisfy
discretized energy dissipation law.

The remain part of this paper is organized as follows. In
Section 2, we present the two second-order stabilized
schemes for the Cahn-Hilliard equation and prove they are
energy stable. In Section 3, we present an error estimate for the BDF2 scheme to derive a convergence rate that does not depend on
$1/\varepsilon$ exponentially. Implementation details and
numerical results for a test problem in a 2-dimensional square
domain are presented in Section 4 to verify our theoretical
results. We end the paper with some concluding remarks in
Section~5.

\section{The two second order stabilized linear schemes}

We first introduce some notations which will be used
throughout the paper. We use $\|\cdot\|_{m,p}$ to denote the
standard norm of the Sobolev space $W^{m,p}(\Omega)$. In
particular, we use $\|\cdot\|_{L^p}$ to denote the norm of
$W^{0,p}(\Omega)=L^{p}(\Omega)$; $\|\cdot\|_{{m}}$ to denote
the norm of $W^{m,2}(\Omega)=H^{m}(\Omega)$; and $\|\cdot\|$
to denote the norm of $W^{0,2}(\Omega)=L^{2}(\Omega)$.  Let
$(\cdot, \cdot)$ represent the $L^{2}$ inner product.  In
addition, define for $p\geq 0$
\[
H^{-p}(\Omega):=\left(H^{p}(\Omega)\right)^{*},\quad
H_{0}^{-p}(\Omega):=\left\{ u \in H^{-p}(\Omega)
  \mid\,\<u,1\>_{p}=0 \right\},
\]
where $\<\cdot,\cdot\>_{p}$ stands for the dual product
between $H^{p}(\Omega)$ and $H^{-p}(\Omega)$. We denote
$L_{0}^{2}(\Omega):= H_{0}^{0}(\Omega)$. For
$v \in L_{0}^{2}(\Omega)$, let
$-\Delta^{-1}v:=v_{1} \in H^{1}(\Omega)\cap
L_{0}^{2}(\Omega)$, where $v_{1}$ is the solution to
\begin{equation}\nn
  -\Delta v_{1}=v \ \ {\rm in}\  \Omega ,\quad \ \frac{\partial v_{1}}{\partial n}=0 \ \ {\rm on}\  \partial \Omega,
\end{equation}
and $\|v\|_{-1}:=\sqrt{(v,-\Delta^{-1}v) }$.

For any given function $\phi(t)$ of $t$, we use $\phi^n$ to
denote an approximation of $\phi(n\tau)$, where $\tau$ is
the step-size. We will frequently use the shorthand
notations: $\delta_{t}\phi^{n+1}:=\phi^{n+1}-\phi^{n}$,
$\delta_{tt}\phi^{n+1}:=\phi^{n+1}-2\phi^{n}+\phi^{n-1}$,
$D_{\tau}\phi^{n+1}:=
\frac{3\phi^{n+1}-4\phi^{n}+\phi^{n-1}}{2\tau}
=\frac{1}{\tau}\delta_{t}\phi^{n+1}+\frac{1}{2\tau}\delta_{tt}\phi^{n+1}
$, $\hat{\phi}^{n+\frac{1}{2}}
:=\frac{3}{2}\phi^{n}-\frac{1}{2}\phi^{n-1}$ and $\hat{\phi}^{n+1}
:=2\phi^{n}-\phi^{n-1}$. Following identities will be used frequently as well
\begin{equation}\label{eq:ID:1}
2(h^{n+1}-h^n, h^{n+1}) = \|h^{n+1}\|^2 - \|h^n\|^2 + \|h^{n+1}-h^n\|^2,
\end{equation}
\begin{equation}\label{eq:ID:2}
(D_\tau h^{n+1}, h^{n+1}) = \frac{1}{4\tau}(\|h^{n+1}\|^2+\|2h^{n+1}\!\!-\!h^n\|^2
-\|h^{n}\|^2\!-\!\|2h^{n}\!-\!h^{n-1}\|^2
+\|\delta_{tt}h^{n+1}\|^2).
\end{equation}
To prove energy stability of the numerical schemes, we
assume that the derivative of $f$ in equation \eqref{eq:CH}
is uniformly bounded, i.e.
\begin{equation}\label{eq:Lip}
\max_{\phi\in\mathbf{R}} | f'(\phi) | \le L,
\end{equation}
where $L$ is a non-negative constant.

Note that, if the phase-field system satisfies the maximum principle, then \eqref{eq:Lip} is satisfied for any smooth $f$.
Although the Cahn-Hilliard equation does not satisfy the maximum principle, it has been shown that
in \cite{caffarelli_l_1995} that for a truncated potential $F$ with quadratic growth at infinities, the maximum norm of
the solution to the Cahn-Hilliard equation is bounded. 
On the other hand, for a more general potential $F$, Feng and Prohl \cite{feng_numerical_2005} proved that if the Cahn-Hilliard equation convergence to 
its sharp-interface limit, then its solution has a $L^\infty$ bound.
Therefore, it has been a common practice
(cf. \cite{kessler_posteriori_2004,shen_numerical_2010,condette_spectral_2011}) to consider the Cahn-Hilliard equations with a truncated double-well potential $F$ such that \eqref{eq:Lip} is satisfied.

\subsection{The stabilized linear Crank-Nicolson scheme}

Suppose $\phi^0=\phi_0(\cdot)$ and
$\phi^1\approx \phi(\cdot,\tau)$ are given, our stabilized liner Crank-Nicolson scheme (abbr. SL-CN)
calculates $\phi^{n+1}, n=1,2,\ldots,N=T/\tau-1$
iteratively, using
\begin{gather}\label{eq:CN:1}
  \frac{\phi^{n+1}-\phi^{n}}{\tau}=\gamma\Delta \mu^{n+\frac{1}{2}},\\
\label{eq:CN:2}
  \mu^{n+\frac{1}{2}}=-\varepsilon \Delta \Big(\frac{\phi^{n+1}+\phi^{n}}{2} \Big)
  +\frac{1}{\varepsilon}f\Big(\frac{3}{2}\phi^{n}-\frac{1}{2}\phi^{n-1}\Big)
  -A\tau \Delta \delta_{t}\phi^{n+1}
  +B\delta_{tt}\phi^{n+1},
\end{gather}
where $A$ and $B$ are two non-negative constants to
stabilize the scheme.

\begin{theorem}\label{cn}
 Assume \eqref{eq:Lip} is satisfied.
  Under the condition
\begin{equation}\label{eq:CN:ABcond}
  A\geq \dfrac{L^{2}}{16\varepsilon^{2}}\gamma, \quad  
  B\geq \dfrac{L}{2\varepsilon},
\end{equation}
the following energy dissipation law
\begin{equation}\label{eq:CN:Edis}
  E_{C}^{n+1}\leq E_{C}^{n}-\Big(2\sqrt{\frac{A}{\gamma}}-\frac{L}{2\varepsilon}\Big)\|\delta_{t}\phi^{n+1}\|^{2}
  -\Big(\frac{B}{2}-\frac{L}{4\varepsilon}\Big)\|\delta_{tt}\phi^{n+1}\|^{2},\hspace{1cm} \forall n\geq1,
\end{equation}
holds for the scheme (\ref{eq:CN:1})-(\ref{eq:CN:2}), where
\begin{equation}\label{eq:CN:E}
  E_{C}^{n+1}=E_{\varepsilon}(\phi^{n+1})
  +\Big(\frac{L}{4\varepsilon}+\frac{B}{2}\Big)\|\delta_{t}\phi^{n+1}\|^{2}.
\end{equation}
\end{theorem}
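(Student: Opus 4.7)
The plan is to test the chemical-potential equation (\ref{eq:CN:2}) against $\delta_{t}\phi^{n+1}$ in $L^{2}(\Omega)$. Using (\ref{eq:CN:1}) and integration by parts with the Neumann boundary condition, the left-hand side becomes $(\mu^{n+\frac{1}{2}},\delta_{t}\phi^{n+1})=\tau\gamma(\mu^{n+\frac{1}{2}},\Delta\mu^{n+\frac{1}{2}})=-\tau\gamma\|\nabla\mu^{n+\frac{1}{2}}\|^{2}$. On the right-hand side, the $\varepsilon$-Laplacian contributes $\frac{\varepsilon}{2}(\|\nabla\phi^{n+1}\|^{2}-\|\nabla\phi^{n}\|^{2})$, the stabilizer $-A\tau\Delta\delta_{t}\phi^{n+1}$ contributes $A\tau\|\nabla\delta_{t}\phi^{n+1}\|^{2}$, and identity (\ref{eq:ID:1}) converts $B(\delta_{tt}\phi^{n+1},\delta_{t}\phi^{n+1})$ into $\frac{B}{2}(\|\delta_{t}\phi^{n+1}\|^{2}-\|\delta_{t}\phi^{n}\|^{2}+\|\delta_{tt}\phi^{n+1}\|^{2})$. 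Collecting everything yields an identity relating the increment $E_{\varepsilon}(\phi^{n+1})-E_{\varepsilon}(\phi^{n})$, the dissipations $A\tau\|\nabla\delta_{t}\phi^{n+1}\|^{2}$ and $\tau\gamma\|\nabla\mu^{n+\frac{1}{2}}\|^{2}$, the telescoping stabilization piece, and a consistency remainder $Q_{*}:=\frac{1}{\varepsilon}(f(\hat{\phi}^{n+\frac{1}{2}}),\delta_{t}\phi^{n+1})-\frac{1}{\varepsilon}\int_{\Omega}(F(\phi^{n+1})-F(\phi^{n}))\,dx$.

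Next I would control $Q_{*}$ using the Lipschitz bound (\ref{eq:Lip}). Setting $\phi^{n+\frac{1}{2}}=(\phi^{n+1}+\phi^{n})/2$ and splitting $f(\hat{\phi}^{n+\frac{1}{2}})=f(\phi^{n+\frac{1}{2}})+[f(\hat{\phi}^{n+\frac{1}{2}})-f(\phi^{n+\frac{1}{2}})]$, the first contribution is handled via the midpoint-rule identity $F(\phi^{n+1})-F(\phi^{n})-f(\phi^{n+\frac{1}{2}})\delta_{t}\phi^{n+1}=\int_{\phi^{n}}^{\phi^{n+1}}(f(s)-f(\phi^{n+\frac{1}{2}}))\,ds$, and the estimate $|f(s)-f(\phi^{n+\frac{1}{2}})|\le L|s-\phi^{n+\frac{1}{2}}|$ bounds its $\Omega$-integral by $\frac{L}{4\varepsilon}\|\delta_{t}\phi^{n+1}\|^{2}$. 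For the second piece, $\hat{\phi}^{n+\frac{1}{2}}-\phi^{n+\frac{1}{2}}=-\frac{1}{2}\delta_{tt}\phi^{n+1}$, so $|f(\hat{\phi}^{n+\frac{1}{2}})-f(\phi^{n+\frac{1}{2}})|\le\frac{L}{2}|\delta_{tt}\phi^{n+1}|$, and Young's inequality distributes the cross product $\|\delta_{tt}\phi^{n+1}\|\,\|\delta_{t}\phi^{n+1}\|$ into the available $\|\delta_{tt}\phi^{n+1}\|^{2}$ and $\|\delta_{t}\phi^{n+1}\|^{2}$ reservoirs. The resulting bound $|Q_{*}|\le\frac{L}{2\varepsilon}\|\delta_{t}\phi^{n+1}\|^{2}+\frac{L}{4\varepsilon}\|\delta_{tt}\phi^{n+1}\|^{2}$ supplies exactly the coefficients appearing in (\ref{eq:CN:Edis}).

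The final ingredient is the duality trick that produces the $2\sqrt{A/\gamma}$ factor. Since $\delta_{t}\phi^{n+1}$ is mean-zero and satisfies $\delta_{t}\phi^{n+1}=\tau\gamma\Delta\mu^{n+\frac{1}{2}}$ with Neumann boundary data, solving the Poisson problem gives $\|\delta_{t}\phi^{n+1}\|_{-1}=\tau\gamma\|\nabla\mu^{n+\frac{1}{2}}\|$; combined with the interpolation $\|u\|^{2}\le\|\nabla u\|\,\|u\|_{-1}$ (valid for mean-zero $u$) and Young's inequality, this yields $2\sqrt{A/\gamma}\,\|\delta_{t}\phi^{n+1}\|^{2}\le A\tau\|\nabla\delta_{t}\phi^{n+1}\|^{2}+\tau\gamma\|\nabla\mu^{n+\frac{1}{2}}\|^{2}$. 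Absorbing this lower bound into the step-one identity, inserting the $Q_{*}$ estimate from step two, adding $\frac{L}{4\varepsilon}(\|\delta_{t}\phi^{n+1}\|^{2}-\|\delta_{t}\phi^{n}\|^{2})$ on both sides to complete the modified energy $E_{C}^{n+1}$, and finally invoking (\ref{eq:CN:ABcond}) to keep the dissipation coefficients nonnegative, delivers (\ref{eq:CN:Edis}).

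The main obstacle is the bookkeeping in the Young's inequalities of step two: the Lipschitz constant $L$ has to be partitioned among $\|\delta_{t}\phi^{n+1}\|^{2}$, $\|\delta_{tt}\phi^{n+1}\|^{2}$, and the telescoping difference $\|\delta_{t}\phi^{n+1}\|^{2}-\|\delta_{t}\phi^{n}\|^{2}$ in exactly the proportions that let the extra $\frac{L}{4\varepsilon}\|\delta_{t}\phi^{n+1}\|^{2}$ built into $E_{C}$ cooperate with the telescoping; a non-optimal Young split would pollute the $\|\delta_{t}\phi^{n}\|^{2}$ coefficient and either spoil the telescoping or force a stricter requirement on $A$ or $B$ than (\ref{eq:CN:ABcond}).
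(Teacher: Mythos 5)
Your overall skeleton (testing with $\delta_t\phi^{n+1}$, converting $(\mu^{n+\frac12},\delta_t\phi^{n+1})$ into $-\tau\gamma\|\nabla\mu^{n+\frac12}\|^2$, the identity \eqref{eq:ID:1} for the $B$-term, and the duality/Young step producing $2\sqrt{A/\gamma}\,\|\delta_t\phi^{n+1}\|^2\le A\tau\|\nabla\delta_t\phi^{n+1}\|^2+\gamma\tau\|\nabla\mu^{n+\frac12}\|^2$) coincides with the paper's proof. The gap is in your treatment of the nonlinear remainder $Q_*$. Your split around the true midpoint $\phi^{n+\frac12}=(\phi^{n+1}+\phi^n)/2$ gives, correctly, $|Q_*|\le \frac{L}{2\varepsilon}\|\delta_t\phi^{n+1}\|^2+\frac{L}{4\varepsilon}\|\delta_{tt}\phi^{n+1}\|^2$ --- but this bound contains \emph{no} $\|\delta_t\phi^{n}\|^2$ term. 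When you then add $\frac{L}{4\varepsilon}(\|\delta_t\phi^{n+1}\|^2-\|\delta_t\phi^{n}\|^2)$ to both sides to build the $E_C$ of \eqref{eq:CN:E}, the $-\frac{L}{4\varepsilon}\|\delta_t\phi^{n}\|^2$ has nothing to cancel against, and the coefficient consumed from $\|\delta_t\phi^{n+1}\|^2$ becomes $\frac{L}{2\varepsilon}+\frac{L}{4\varepsilon}=\frac{3L}{4\varepsilon}$. You end up with
\begin{equation*}
E_C^{n+1}\le E_C^{n}-\Big(2\sqrt{\tfrac{A}{\gamma}}-\tfrac{3L}{4\varepsilon}\Big)\|\delta_t\phi^{n+1}\|^2
-\tfrac{L}{4\varepsilon}\|\delta_t\phi^{n}\|^2-\Big(\tfrac{B}{2}-\tfrac{L}{4\varepsilon}\Big)\|\delta_{tt}\phi^{n+1}\|^2,
\end{equation*}
which does not imply \eqref{eq:CN:Edis} (that would require $\|\delta_t\phi^{n+1}\|\le\|\delta_t\phi^{n}\|$), and under \eqref{eq:CN:ABcond} the factor $2\sqrt{A/\gamma}-\frac{3L}{4\varepsilon}$ can be negative, so your condition on $A$ would have to be tightened to $A\ge\frac{9L^2\gamma}{64\varepsilon^2}$. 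No reallocation of the Young split repairs this, because the deficiency is structural: your decomposition never produces a $\|\delta_t\phi^{n}\|^2$ contribution.

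The paper avoids this by Taylor-expanding \emph{both} $F(\phi^{n+1})$ and $F(\phi^{n})$ at the extrapolated point $\hat\phi^{n+\frac12}=\frac32\phi^{n}-\frac12\phi^{n-1}$; since $\phi^{n+1}-\hat\phi^{n+\frac12}$ and $\phi^{n}-\hat\phi^{n+\frac12}=-\frac12\delta_t\phi^{n}$ combine to give $(\phi^{n+1}-\hat\phi^{n+\frac12})^2-(\phi^{n}-\hat\phi^{n+\frac12})^2=\delta_t\phi^{n+1}\,\delta_{tt}\phi^{n+1}$, the remainder is bounded by $\frac{L}{4\varepsilon}\big(\|\delta_t\phi^{n+1}\|^2+\|\delta_{tt}\phi^{n+1}\|^2+\|\delta_t\phi^{n}\|^2\big)$, and the $\|\delta_t\phi^{n}\|^2$ piece cancels exactly against the telescoping of the $\frac{L}{4\varepsilon}\|\delta_t\phi\|^2$ term in $E_C$, yielding precisely the coefficients of \eqref{eq:CN:Edis}. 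Your argument, as written, does prove a correct dissipation law for the simpler modified energy $E_\varepsilon(\phi^{n+1})+\frac{B}{2}\|\delta_t\phi^{n+1}\|^2$ with the claimed coefficients, but that is a different statement from Theorem~\ref{cn}; to prove the theorem as stated you should replace your midpoint decomposition with the paper's expansion at $\hat\phi^{n+\frac12}$.
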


\begin{proof}
  Pairing \eqref{eq:CN:1} with
  $\tau \mu^{n+\frac{1}{2}}$, \eqref{eq:CN:2} with
  $-\delta_t\phi^{n+1}$, and combining the results, we get
  \begin{equation}\label{cn5}
    \begin{split}
      &\frac{\varepsilon}{2}(\|\nabla \phi^{n+1}\|^{2} -
      \|\nabla \phi^{n}\|^{2})
      +\frac{1}{\varepsilon}(f\big(\hat{\phi}^{n+\frac{1}{2}}\big),\delta_t\phi^{n+1} )\\
      =&-\gamma\tau \|\nabla \mu^{n+\frac{1}{2}}\|^{2}-A\tau \|\nabla
      \delta_{t}\phi^{n+1}\|^{2}
      -B(\delta_{tt}\phi^{n+1},\delta_t\phi^{n+1}).
    \end{split}
  \end{equation}
  Pairing (\ref{eq:CN:1}) with
  $2\sqrt{\frac{A}{\gamma}}\tau\delta_t\phi^{n+1}$, then using
  Cauchy-Schwartz inequality, we get
  \begin{equation}\label{cn7}
  2\sqrt{\tfrac{A}{\gamma}}\|\delta_{t} \phi^{n+1}\|^{2}
  = -2\sqrt{A\gamma}\tau(\nabla \mu^{n+\frac12},\nabla\delta_t \phi^{n+1})
  \leq \gamma\tau \|\nabla \mu^{n+\frac12}\|^{2} +
  A\tau\|\nabla \delta_{t}\phi^{n+1}\|^{2}.
  \end{equation}
  To handle the term involving $f$,  we expand $F(\phi^{n+1})$ and $F(\phi^n)$ at
  $\hat{\phi}^{n+\frac{1}{2}}$ as
  \begin{align*}
    F(\phi^{n+1}) &= F(\hat{\phi}^{n+\frac{1}{2}})+f(\hat{\phi}^{n+\frac{1}{2}})(\phi^{n+1}-\hat{\phi}^{n+\frac{1}{2}})+\frac{1}{2}f'(\xi^{n}_{1})(\phi^{n+1}-\hat{\phi}^{n+\frac{1}{2}})^{2},\\
    F(\phi^{n}) &= F(\hat{\phi}^{n+\frac{1}{2}})+f(\hat{\phi}^{n+\frac{1}{2}})(\phi^{n}-\hat{\phi}^{n+\frac{1}{2}})+\frac{1}{2}f'(\xi^{n}_{2})(\phi^{n}-\hat{\phi}^{n+\frac{1}{2}})^{2},
  \end{align*}
  where $\xi^{n}_1$ is a number between $\phi^{n+1}$ and $\hat{\phi}^{n+\frac12}$, $\xi^{n}_2$ is a number between $\phi^n$ and $\hat{\phi}^{n+\frac12}$.
  Taking the difference of above two equations, we have
  \begin{equation}\nonumber
    \begin{split}
      &  F(\phi^{n+1})-F(\phi^{n}) - f(\hat{\phi}^{n+\frac{1}{2}})(\phi^{n+1}-\phi^{n})\\
      ={} &  \frac{1}{2}f'(\xi^{n}_{1})
      \left[(\phi^{n+1}-\hat{\phi}^{n+\frac{1}{2}})^{2} - (\phi^{n}-\hat{\phi}^{n+\frac{1}{2}})^{2}
      \right]
      - \frac{1}{2}(f'(\xi^{n}_{2})-f'(\xi^{n}_{1}))(\phi^{n}-\hat{\phi}^{n+\frac{1}{2}})^{2}\\
      ={} &  
      \frac{1}{2}f'(\xi^{n}_{1})\delta_{t}\phi^{n+1}\delta_{tt}\phi^{n+1}
      - \frac{1}{8}(f'(\xi^{n}_{2})-f'(\xi^{n}_{1}))(\delta_{t}\phi^{n})^{2}\\
      \le{} & 
      \frac{L}{4}(|\delta_t \phi^{n+1}|^2 + |\delta_{tt}\phi^{n+1}|^2)
      + \frac{L}{4}|\delta_t\phi^n|^2.
    \end{split}
  \end{equation}
  Multiplying the above equation with $\dfrac{1}{\varepsilon}$, then taking integration leads to
 \begin{equation}\label{eq:cn2}
   \frac{1}{\varepsilon}(F(\phi^{n+1})-F(\phi^{n})-f(\hat{\phi}^{n+\frac{1}{2}})\delta_t\phi^{n+1}, 1) 
  \le
  \frac{L}{4\varepsilon}(\|\delta_t \phi^{n+1}\|^2 + \|\delta_{tt}\phi^{n+1}\|^2
  + \|\delta_t\phi^n\|^2).
  \end{equation}
For the term involving $B$, by using identity
\eqref{eq:ID:1} with $h^{n+1}=\delta_t \phi^{n+1}$, one get
\begin{equation}\label{cs11-0}
  -B(\delta_{tt}\phi^{n+1},\delta_t\phi^{n+1})
  =-\frac{B}{2}\|\delta_{t}\phi^{n+1}\|^{2}+\frac{B}{2}\|\delta_{t}\phi^{n}\|^{2}
  -\frac{B}{2}\|\delta_{tt}\phi^{n+1}\|^{2}.
\end{equation}
Summing up \eqref{cn5}-\eqref{cs11-0}, we obtain
\begin{equation}\label{cn8}
  \begin{split}
    &\frac{\varepsilon}{2}(\|\nabla \phi^{n+1}\|^{2} -
    \|\nabla \phi^{n}\|^{2}) 
    + \frac{1}{\varepsilon}(F(\phi^{n+1})-F(\phi^{n}),1)
    +
    \frac{B}{2}\|\delta_{t}\phi^{n+1}\|^{2} -\frac{B}{2}\|\delta_{t}\phi^{n}\|^{2}\\
    \leq & -
    2\sqrt{\frac{A}{\gamma}}\|\delta_{t}\phi^{n+1}\|^{2}
    +\frac{L}{4\varepsilon}\|\delta_{t} \phi^{n+1}\|^{2}
    +\frac{L}{4\varepsilon}\|\delta_{t} \phi^{n}\|^{2}
    -\frac{B}{2}\|\delta_{tt}\phi^{n+1}\|^{2}
    +\frac{L}{4\varepsilon}\|\delta_{tt} \phi^{n+1}\|^{2},
\end{split}
\end{equation}
which is the energy estimate \eqref{eq:CN:Edis}.\qed
\end{proof}

\subsection{The stabilized linear BDF2 scheme}

Suppose $\phi^0=\phi_0(\cdot)$ and
$\phi^1\approx \phi(\cdot,\tau)$ are given, our stabilized
linear BDF2 scheme (abbr. SL-BDF2) calculate
$\phi^{n+1}, n=1,2,\ldots,N=T/\tau-1$ iteratively, using
\begin{gather}\label{eq:BDF:1}
  \frac{3\phi^{n+1}-4\phi^{n}+\phi^{n-1}}{2\tau}=\gamma\Delta \mu^{n+1},\\
  \mu^{n+1}=-\varepsilon \Delta \phi^{n+1} + \frac{1}{\varepsilon}f(2\phi^{n}-\phi^{n-1})
  -A\tau \Delta \delta_{t}\phi^{n+1}
  +B\delta_{tt}\phi^{n+1}, \label{eq:BDF:2}
\end{gather}
where $A$ and $B$ are two non-negative constants.

\begin{theorem}\label{ac1}
 Assume \eqref{eq:Lip} is satisfied, and
  \begin{equation}\label{eq:BDF:initm}
	 \frac{1}{|\Omega|} \int_\Omega \phi^1 dx=\frac{1}{|\Omega|}\int_\Omega \phi^0 dx=m_0.
  \end{equation}
  Then under the condition
  \begin{equation}\label{eq:BDF:ABcond}
    B\geq\dfrac{L}{\varepsilon};
    \quad
    A\geq\frac{\gamma}{\alpha_2}\dfrac{L^{2}}{16\varepsilon^{2}} -\alpha_1\frac{\varepsilon}{2\tau}, 
    \quad  0\le\alpha_1\le 1,
    \quad  0<\alpha_2\le1,
  \end{equation}
  the following energy dissipation law
  \begin{multline}\label{eq:BDF:Edis}
      E_{B}^{n+1} \leq E_{B}^{n} -\frac{1}{4\tau\gamma}\|\delta_{tt}\phi^{n+1}\|_{-1}^{2}
      -(1-\alpha_1)\frac{\varepsilon}{2}\|\nabla\delta_{t}\phi^{n+1}\|^{2}
      -(1-\alpha_2)\frac{1}{\tau\gamma}\|\delta_{t}\phi^{n+1}\|^{2}_{-1}\\
      -\left(2\sqrt{\frac{\alpha_2}{\gamma}(A+\frac{\alpha_1\varepsilon}{2\tau})}-\frac{L}{2\varepsilon}\right)\|\delta_{t}\phi^{n+1}\|^{2}
      -\Big(\frac{B}{2}-\frac{L}{2\varepsilon}\Big)\|\delta_{tt}\phi^{n+1}\|^{2},
      \quad \forall\: n\geq1,
  \end{multline}
  holds for the scheme (\ref{eq:BDF:1})-(\ref{eq:BDF:2}),
  where 
  \begin{equation}\label{eq:BDF:E}
    E_{B}^{n+1}=E_{\varepsilon}(\phi^{n+1})+\frac{1}{4\tau\gamma}\|\delta_{t}\phi^{n+1}\|_{-1}^{2}
    +\Big(\frac{L}{2\varepsilon}+\frac{B}{2}\Big)
    \|\delta_{t}\phi^{n+1}\|^{2}.
  \end{equation}
\end{theorem}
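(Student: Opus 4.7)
The plan is to follow the energy-method strategy of Theorem~\ref{cn}, but to test the discrete flux equation \eqref{eq:BDF:1} against $-\Delta^{-1}\delta_t\phi^{n+1}$ rather than against $\mu$, so that the dissipation naturally appears in the $H^{-1}$ norm native to the Cahn--Hilliard $H^{-1}$-gradient-flow structure underlying BDF2. A preliminary observation is that integrating \eqref{eq:BDF:1} over $\Omega$ and using the Neumann boundary condition together with \eqref{eq:BDF:initm} yields, by induction, $\int_\Omega\phi^n\,dx=m_0|\Omega|$ for every $n$; consequently $\delta_t\phi^{n+1}$ has zero mean and $-\Delta^{-1}\delta_t\phi^{n+1}$ is well defined in $H^1\cap L_0^2$.

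Next I would derive two expressions for $(\mu^{n+1},\delta_t\phi^{n+1})$. The first comes from pairing \eqref{eq:BDF:2} with $\delta_t\phi^{n+1}$ and applying identity \eqref{eq:ID:1} (with $h=\nabla\phi$) to $\varepsilon(\nabla\phi^{n+1},\nabla\delta_t\phi^{n+1})$ and (with $h=\delta_t\phi$) to $B(\delta_{tt}\phi^{n+1},\delta_t\phi^{n+1})$, producing the $\tfrac{\varepsilon}{2}\|\nabla\phi^{n+1}\|^2$ and $\tfrac{B}{2}\|\delta_t\phi^{n+1}\|^2$ telescoping differences together with the dissipative $\|\nabla\delta_t\phi^{n+1}\|^2$ and $\|\delta_{tt}\phi^{n+1}\|^2$ terms and the potential contribution $\tfrac{1}{\varepsilon}(f(\hat\phi^{n+1}),\delta_t\phi^{n+1})$. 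The second expression is obtained by pairing \eqref{eq:BDF:1} with $-\Delta^{-1}\delta_t\phi^{n+1}$: integration by parts with the zero-mean property gives $(\mu^{n+1},\delta_t\phi^{n+1})=-\tfrac{1}{\gamma}(D_\tau\phi^{n+1},-\Delta^{-1}\delta_t\phi^{n+1})$; substituting $\tau D_\tau\phi^{n+1}=\delta_t\phi^{n+1}+\tfrac{1}{2}\delta_{tt}\phi^{n+1}$ and applying \eqref{eq:ID:1} in the $(\cdot,-\Delta^{-1}\cdot)$ inner product yields
\begin{equation*}
(\mu^{n+1},\delta_t\phi^{n+1}) = -\tfrac{1}{\tau\gamma}\|\delta_t\phi^{n+1}\|_{-1}^2-\tfrac{1}{4\tau\gamma}\bigl(\|\delta_t\phi^{n+1}\|_{-1}^2-\|\delta_t\phi^n\|_{-1}^2+\|\delta_{tt}\phi^{n+1}\|_{-1}^2\bigr).
\end{equation*}
The piece $\tfrac{1}{4\tau\gamma}(\|\delta_t\phi^{n+1}\|_{-1}^2-\|\delta_t\phi^n\|_{-1}^2)$ will be absorbed into the modified energy $E_B$ in \eqref{eq:BDF:E}, while the remaining negative contributions supply the $\|\delta_t\phi^{n+1}\|_{-1}^2$ and $\|\delta_{tt}\phi^{n+1}\|_{-1}^2$ dissipation terms in \eqref{eq:BDF:Edis}.

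For the nonlinear term I would Taylor expand $F(\phi^{n+1})$ and $F(\phi^n)$ about $\hat\phi^{n+1}=2\phi^n-\phi^{n-1}$. Since $\phi^{n+1}-\hat\phi^{n+1}=\delta_{tt}\phi^{n+1}$ and $\phi^n-\hat\phi^{n+1}=-\delta_t\phi^n$, subtracting the two expansions and bounding the Lagrange remainders via \eqref{eq:Lip} delivers the BDF2 analogue of \eqref{eq:cn2},
\begin{equation*}
\tfrac{1}{\varepsilon}\bigl(F(\phi^{n+1})-F(\phi^n)-f(\hat\phi^{n+1})\delta_t\phi^{n+1},1\bigr)\le\tfrac{L}{2\varepsilon}\bigl(\|\delta_{tt}\phi^{n+1}\|^2+\|\delta_t\phi^n\|^2\bigr),
\end{equation*}
with constant $L/(2\varepsilon)$ rather than $L/(4\varepsilon)$ because the linear extrapolation $\hat\phi^{n+1}$ sits further from $\phi^{n+1}$ than $\hat\phi^{n+1/2}$ does. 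Equating the two expressions for $(\mu^{n+1},\delta_t\phi^{n+1})$, inserting this inequality, and regrouping in the symmetric form \eqref{eq:BDF:E} produces
\begin{equation*}
E_B^{n+1}-E_B^n \le -\tfrac{1}{\tau\gamma}\|\delta_t\phi^{n+1}\|_{-1}^2-\tfrac{1}{4\tau\gamma}\|\delta_{tt}\phi^{n+1}\|_{-1}^2-\tfrac{\varepsilon}{2}\|\nabla\delta_t\phi^{n+1}\|^2-A\tau\|\nabla\delta_t\phi^{n+1}\|^2-\bigl(\tfrac{B}{2}-\tfrac{L}{2\varepsilon}\bigr)\|\delta_{tt}\phi^{n+1}\|^2+\tfrac{L}{2\varepsilon}\|\delta_t\phi^{n+1}\|^2.
\end{equation*}

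The main obstacle is to absorb the residual $+\tfrac{L}{2\varepsilon}\|\delta_t\phi^{n+1}\|^2$. I would split the gradient dissipation as $\tfrac{\varepsilon}{2}=(1-\alpha_1)\tfrac{\varepsilon}{2}+\alpha_1\tfrac{\varepsilon}{2}$ and the $H^{-1}$ dissipation as $\tfrac{1}{\tau\gamma}=(1-\alpha_2)\tfrac{1}{\tau\gamma}+\alpha_2\tfrac{1}{\tau\gamma}$, retain the $(1-\alpha_i)$-fractions as pure dissipation, and combine the $\alpha_1\tfrac{\varepsilon}{2}$ and $A\tau$ portions with $\tfrac{\alpha_2}{\tau\gamma}$. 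Using the duality inequality $\|v\|^2\le\|\nabla v\|\,\|v\|_{-1}$ (valid for any mean-zero $v$) followed by AM-GM,
\begin{equation*}
\bigl(\alpha_1\tfrac{\varepsilon}{2}+A\tau\bigr)\|\nabla\delta_t\phi^{n+1}\|^2+\tfrac{\alpha_2}{\tau\gamma}\|\delta_t\phi^{n+1}\|_{-1}^2 \ge 2\sqrt{\tfrac{\alpha_2}{\gamma}\bigl(A+\tfrac{\alpha_1\varepsilon}{2\tau}\bigr)}\,\|\delta_t\phi^{n+1}\|^2,
\end{equation*}
so condition \eqref{eq:BDF:ABcond} guarantees that this reserve exceeds $\tfrac{L}{2\varepsilon}\|\delta_t\phi^{n+1}\|^2$ and leaves precisely the negative coefficient appearing in \eqref{eq:BDF:Edis}. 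The assumption $B\ge L/\varepsilon$ handles the $\|\delta_{tt}\phi^{n+1}\|^2$ term as in the CN case, and collecting everything yields \eqref{eq:BDF:Edis}. The interplay among the three dissipative reserves---$\|\nabla\delta_t\phi^{n+1}\|^2$, $\|\delta_t\phi^{n+1}\|_{-1}^2$, and the stabilization $A\tau\|\nabla\delta_t\phi^{n+1}\|^2$---is the principal source of difficulty and is precisely what the free parameters $\alpha_1,\alpha_2$ in \eqref{eq:BDF:ABcond} are engineered to balance.
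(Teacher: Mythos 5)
Your proposal is correct and follows essentially the same route as the paper's proof: the same test functions ($-\Delta^{-1}\delta_t\phi^{n+1}/\gamma$ for the flux equation and $-\delta_t\phi^{n+1}$ for the chemical-potential equation, equivalently your ``two expressions for $(\mu^{n+1},\delta_t\phi^{n+1})$''), the same Taylor expansion of $F$ about $\hat\phi^{n+1}$ yielding the $\tfrac{L}{2\varepsilon}(\|\delta_{tt}\phi^{n+1}\|^2+\|\delta_t\phi^n\|^2)$ bound, and the same final absorption via $\chi\|\nabla v\|^2+\tfrac{\alpha_2}{\tau\gamma}\|v\|_{-1}^2\ge 2\sqrt{\chi\alpha_2/(\tau\gamma)}\,\|v\|^2$ with $\chi=A\tau+\alpha_1\varepsilon/2$, which is exactly the paper's inequality \eqref{cs14}. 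No gaps.
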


\begin{proof}
	1) Integration both sides of equation \eqref{eq:BDF:1}, then using the Neumann boundary condition of $\mu$ and property \eqref{eq:BDF:initm}, we get
	\begin{equation}
	\frac{1}{|\Omega|}\int_\Omega \phi^{n+1} dx = m_0,\quad n=1,\ldots, N.
	\end{equation}
	Thus $\delta_t\phi^{n+1} \in L_0^2$ for $n=0,\ldots, N$.
	
  2) Pairing (\ref{eq:BDF:1}) with
  $(-\Delta)^{-1}\delta_t\phi^{n+1}/\gamma$, plus (\ref{eq:BDF:2}) paired with $-\delta_t\phi^{n+1}$, we get
  \begin{equation}\label{cs7}
    \begin{split}
      \Big(\frac{1}{\gamma}D_\tau\phi^{n+1},(-\Delta)^{-1}\delta_t\phi^{n+1}\Big)
      ={} & \varepsilon (\Delta \phi^{n+1},\delta_t\phi^{n+1})
      -\frac{1}{\varepsilon}(f(\hat{\phi}^{n+1}),\delta_t\phi^{n+1})\\
      & - A\tau \|\nabla
      \delta_{t}\phi^{n+1} \|^2
      -B(\delta_{tt}\phi^{n+1},\delta_t\phi^{n+1}).
    \end{split}
  \end{equation}
By integration by parts and applying \eqref{eq:ID:1}, \eqref{eq:ID:2}, following identities hold
  \begin{equation}\label{cs8}
    \begin{split}
      &-\Big(\frac1\gamma D_\tau\phi^{n+1},(-\Delta)^{-1}\delta_t\phi^{n+1}\Big)\\
      = &-\frac{1}{\tau\gamma}\|\delta_{t}\phi^{n+1}\|_{-1}^{2}
      -\frac{1}{4\tau\gamma}\left(\|\delta_{t}\phi^{n+1}\|_{-1}^{2}-\|\delta_{t}\phi^{n}\|_{-1}^{2}
      +\|\delta_{tt}\phi^{n+1}\|_{-1}^{2}\right),
    \end{split}
  \end{equation}
  \begin{equation}\label{cs9}
    \varepsilon (\Delta \phi^{n+1},\delta_t\phi^{n+1})
    =-\frac{\varepsilon}{2}(\|\nabla\phi^{n+1}\|^{2}-\|\nabla\phi^{n}\|^{2}
    +\|\nabla\delta_{t}\phi^{n+1}\|^{2}),
  \end{equation}
  \begin{equation}\label{cs11-1}
    -B(\delta_{tt}\phi^{n+1},\delta_t\phi^{n+1})
    =-\frac{B}{2}\|\delta_{t}\phi^{n+1}\|^{2}+\frac{B}{2}\|\delta_{t}\phi^{n}\|^{2}
    -\frac{B}{2}\|\delta_{tt}\phi^{n+1}\|^{2}.
  \end{equation}
  To handle the term involves $f$ in \eqref{cs7}, we
  expand $F(\phi^{n+1})$ and $F(\phi^n)$ at
  $\hat{\phi}^{n+1}$ as
  \begin{align*}\nonumber
    F(\phi^{n+1})&=F(\hat{\phi}^{n+1})+f(\hat{\phi}^{n+1})(\phi^{n+1}-\hat{\phi}^{n+1})+\frac{1}{2}f'(\zeta^{n}_{1})(\phi^{n+1}-\hat{\phi}^{n+1})^{2},\\
    F(\phi^{n})&=F(\hat{\phi}^{n+1})+f(\hat{\phi}^{n+1})(\phi^{n}-\hat{\phi}^{n+1})+\frac{1}{2}f'(\zeta^{n}_{2})(\phi^{n}-\hat{\phi}^{n+1})^{2},
  \end{align*}
 where $\zeta^{n}_1$ is a number between $\phi^{n+1}$ and $\hat{\phi}^{n+1}$, $\zeta^{n}_2$ is a number between $\phi^n$ and $\hat{\phi}^{n+1}$.
 Taking the difference of above two equations, using the fact 
  $\phi^{n+1}-\hat{\phi}^{n+1}=\delta_{tt}\phi^{n+1}$ and $\phi^{n}-\hat{\phi}^{n+1} = -\delta_t\phi^{n}$,
  we obtain
  \begin{equation}\label{bta4}
    \begin{split}
      F(\phi^{n+1})-F(\phi^{n}) 
      -f(\hat{\phi}^{n+1})\delta_t\phi^{n+1} 
      ={}&
      \frac{1}{2}f'(\zeta^{n}_{1})(\delta_{tt}\phi^{n+1})^{2}
      -\frac{1}{2}f'(\zeta^{n}_{2})(\delta_{t}\phi^{n})^{2}\\
      \le{} &
      \frac{L}{2}|\delta_{tt}\phi^{n+1}|^2
      +\frac{L}{2}|\delta_{t}\phi^{n}|^2.
    \end{split}
  \end{equation}
  Taking inner product of the above equation with constant $1/\varepsilon$, then combining the result with 
   (\ref{cs7}), (\ref{cs8}), (\ref{cs9}) and (\ref{cs11-1}),
   we obtain
  \begin{equation}\label{cs12}
    \begin{split}
      &\frac{1}{\varepsilon}(F(\phi^{n+1})-F(\phi^{n}),1)
      +\frac{\varepsilon}{2}(\|\nabla\phi^{n+1}\|^{2}-\|\nabla\phi^{n}\|^{2})\\
      &+\frac{1}{4\tau\gamma}(\|\delta_{t}\phi^{n+1}\|_{-1}^{2}-\|\delta_{t}\phi^{n}\|_{-1}^{2})
      +\Big(\frac{L}{2\varepsilon}+\frac{B}{2}\Big)(\|\delta_{t}\phi^{n+1}\|^{2}-\|\delta_{t}\phi^{n}\|^{2})\\
      \leq&
      -\frac{1}{4\tau\gamma}\|\delta_{tt}\phi^{n+1}\|_{-1}^{2}
      -\frac{1}{\tau\gamma}\|\delta_{t}\phi^{n+1}\|_{-1}^{2}
      -\frac{\varepsilon}{2}\|\nabla\delta_{t}\phi^{n+1}\|^{2}
      -A\tau\|\nabla\delta_{t}\phi^{n+1}\|^{2}\\
      &+\frac{L}{2\varepsilon}\|\delta_{t}\phi^{n+1}\|^{2} 
      -\frac{B}{2}\|\delta_{tt}\phi^{n+1}\|^{2}+\frac{L}{2\varepsilon}\|\delta_{tt}\phi^{n+1}\|^{2}.
    \end{split}
  \end{equation}
  Combining the above equation and the inequality
  \begin{equation}\label{cs14}
    \chi\|\nabla\delta_{t}\phi^{n+1}\|^{2}+\dfrac{\alpha_2}{\tau\gamma}\|\delta_{t}\phi^{n+1}\|_{-1}^{2}
    \geq 2\sqrt{ \frac{\chi\alpha_2}{\tau\gamma}}\|\delta_{t}\phi^{n+1}\|^{2},
  \end{equation}
  with $\chi=A\tau+ \frac{\alpha_1\varepsilon}{2}, 0\le \alpha_1\le 1$, $0<\alpha_2\le1$, we get the energy estimate \eqref{eq:BDF:Edis}.\qed
\end{proof}

\begin{remark}\label{rmk:stab3}
	The discrete Energy $E_B$ defined in equation
	\eqref{eq:BDF:E} is a second order approximations to the
	original energy $E$, since
	$ \| \delta_t \phi^{n+1} \|^2, \| \delta_t \phi^{n+1} \|_{-1}^2 \sim O(\tau^2)$ provided that the schemes converge.  On the
	other side, summing up the equation \eqref{eq:BDF:Edis} with $\alpha_1=\alpha_2=1$ for
	$n=1,\ldots, N$, we get
	\begin{equation}\label{eq:steady}
	E^{N+1}_{B} + \sum_{n=1}^N \left( 
	\frac{1}{4\tau\gamma}\|\delta_{tt}\phi^{n+1}\|_{-1}^{2}
	+\beta_1\|\delta_{t}\phi^{n+1}\|^{2}
	+\beta_2\|\delta_{tt}\phi^{n+1}\|^{2}
	\right) \leq E^1_{B},
	\end{equation}
	where $\beta_1=2\sqrt{\frac{A}{\gamma}+\frac{\varepsilon}{2\tau\gamma}}-\frac{L}{2\varepsilon}$, $\beta_2=\frac{B}{2}-\frac{L}{2\varepsilon}$.
	By taking $N\rightarrow\infty$, we get
	$\delta_t \phi^{N+1} \rightarrow 0$ and $\delta_{tt} \phi^{N+1} \rightarrow 0$ if $\beta_1>0$ and $\beta_2\ge0$, which means the
	system will eventually converge to a steady state. By
	equation \eqref{eq:BDF:1} and \eqref{eq:BDF:2}, this
	steady state is an extreme point of the original energy
	functional $E$. Same argument applies to the LS-CN scheme
	and similar second order stabilization schemes for the Allen-Cahn equation~\cite{wang_energy_2018}.
\end{remark}

\begin{remark}\label{rmk:stab1}
  The constant $A$ defined in equation \eqref{eq:CN:ABcond}
  and \eqref{eq:BDF:ABcond} seems to be quite large when
  $\varepsilon$ is small, but it is not necessarily
  true. Since usually $\gamma$ is a small constant related
  to $\varepsilon$. For example, it was showed in
  \cite{magaletti2013sharp} that, the Cahn-Hilliard equation
  coupled with the Navier-Stokes equations have a
  sharp-interface limit when
  $O(\varepsilon^3) \le \gamma \le O(\varepsilon)$, while
  $\gamma \sim O(\varepsilon^2)$ gives the fastest
  convergence. A similar result is obtained for the Cahn-Hilliard Navier-Stokes system with a more general boundary condition~\cite{xu_sharp-interface_2017}. On the other hand, the numerical results in
  Section 4 shows $A$ can take much smaller values than
  those defined in \eqref{eq:CN:ABcond} and
  \eqref{eq:BDF:ABcond} when nonzero $B$ values are used.
\end{remark}

\begin{remark}\label{rmk:stab2}
	From equation \eqref{eq:BDF:ABcond}, we see that 
	the SL-BDF2 scheme is stable (equation \eqref{eq:BDF:Edis} holds with $\alpha_1=\alpha_2=1$) with any $A \ge 0$, if
	\begin{equation}\nonumber
	 \tau \leq \frac{8\varepsilon^3}{L^2 \gamma}.
	\end{equation}
	If $\tau$ small enough, the combination of the term
	$\frac{1}{\tau}\|\delta_{t}\phi^{n+1}\|_{-1}^{2}$ and term
	$\frac{\varepsilon}{2}\|\nabla\delta_{t}\phi^{n+1}\|^{2}$
	controls the term $\| \delta_{tt} \phi^{n+1} \|^2$ as
	well, since 
	\begin{equation}\nonumber
	 \| \delta_{tt} \phi^{n+1} \|^2 \leq 2\| \delta_t \phi^{n+1} \|^2
	  + 2\| \delta_t \phi^n \|^2.
	\end{equation}
	 A direct calculation shows that the SL-BDF2 scheme is stable with
    any $A\ge 0$, $B\ge 0$, if
	\begin{equation}\label{eq:BDF:tcond}
	 \tau \leq \frac{8\varepsilon^3}{25L^2 \gamma}.
	\end{equation}
\end{remark}


\section{Convergence analysis}

In this section, we shall establish the error estimate of
the semi-discretized scheme SL-BDF2 for the Cahn-Hilliard
equation in the norm of $l^{\infty}(0,T;H^{-1})\cap l^{2}(0,T;H^{1})$. We will shown that, if the interface is well
developed in the initial condition, the error bounds depend
on $\frac{1}{\varepsilon}$ only in some lower polynomial
order for small $\varepsilon$. Similar error estimate result can be obtained for the SL-CN scheme but the analysis is more involved, we put it into  \cite{WangYu2017b}.

Let $\phi(t^{n})$ be the
exact solution at time $t=t^{n}$ to equation
(\ref{eq:CH}) and $\phi^{n}$ be the solution at time
$t=t^{n}$ to the time discrete numerical scheme
(\ref{eq:BDF:1})-(\ref{eq:BDF:2}), we define error function
$e^{n}:=\phi^{n}-\phi(t^{n})$. Obviously $e^{0}=0$.
 
Before presenting the detailed error analysis, we first make
some assumptions. For simplicity, we take $\gamma=1$ in this
section, and assume $0<\varepsilon \ll 1$.  We use notation
$\lesssim$ in the sense that $f\lesssim g$ means that
$f \le C g$ with a positive constant $C$ independent of
$\tau, \varepsilon$.
\begin{assump}\label{ap:1}
  We assume that $f$ either satisfies the following properties (i) and (ii), or (i) and (iii).
  \begin{enumerate}
  \item [(i)]$F\in C^{4}(\mathbf{R})$, 
    $F(\pm 1)=0$, and $F>0$ elsewhere. There exist two
    non-negative constants $B_0,B_1$, such that
    \begin{equation}\label{eq:AP:Fcoercive} \phi^2 \le
      B_0 + B_1 F(\phi),\quad \forall\; \phi\in\mathbf{R}.
	\end{equation}
  \item[(ii)] $f=F'$. $f'$ and $f''$ are uniformly bounded, or,  $f$
    satisfies \eqref{eq:Lip} and
    \begin{equation}\label{eq:Lip2}
      \max_{\phi\in\mathbf{R}} | f''(\phi) | \le L_2,
    \end{equation}
    where $L_2$ is a non-negative constant.
   \item[(iii)] $f$  satisfies for some finite $2\le p \leq 
    3+ \frac{d}{3(d-2)}$
    and positive numbers $\tilde{c}_{i} >0$, $i=0,\ldots,5$,
    \begin{equation}\label{eq:AP:fp}
    \tilde{c}_1 |\phi|^{p-2} - \tilde{c}_0 \leq f'(\phi) \leq \tilde{c}_2 |\phi|^{p-2}+\tilde{c}_3,
    \end{equation}
    \begin{equation}\label{eq:AP:fpp}
    | f''(\phi) | \leq \tilde{c}_4 |\phi|^{(p-3)^+}+\tilde{c}_5,
    \end{equation}
    where for any real number $a$, the notation $(a)^+ := \max\{ a, 0\}$.\qed
  \end{enumerate}
\end{assump}

Note that Assumption \ref{ap:1} (ii) is a special case of Assumption \ref{ap:1} (iii) with $p=2$. The commonly-used quartic double-well potential satisfies Assumption (i) and (iii) with $p=4$.
Furthermore, from equation \eqref{eq:AP:fp} we easily get 
\begin{equation} \label{eq:AP:fpl}
-(f'(\phi) u, u) \le \tilde{c}_0 \| u \|^2, \quad \forall\, u\in L^2(\Omega).
\end{equation}
\begin{assump}\label{ap:2}
	We assume 
	that $\phi^0$ is smooth enough. More precisely,
	there exist constant $m_{0}$ and
	non-negative constants $\sigma_{1}, \ldots, \sigma_6$, such that
	\begin{equation}\label{eq:AP:m0}
	m_{0}:=\frac{1}{|\Omega|}\int_{\Omega}\phi^{0}(x){\rm d}x \in (-1,1),
	\end{equation}	
	\begin{equation}\label{eq:AP:E0}
	E_{\varepsilon}(\phi^{0}):=\frac{\varepsilon}{2}\|\nabla \phi^{0}\|^{2}+\frac{1}{\varepsilon}\|F(\phi^{0})\|_{L^{1}}\lesssim \varepsilon^{-\sigma_{1}},
	\end{equation}
	\begin{equation}\label{eq:AP:htHn1}
	\|\phi_t^0\|_{-1}^2\lesssim \varepsilon^{-\sigma_2},
	\end{equation}
	\begin{equation}\label{eq:AP:htL2}
	\|\phi_{t}^0\|^2\lesssim \varepsilon^{-\sigma_3};
	\end{equation}
	\begin{equation}\label{eq:AP:LEt0}
	\varepsilon\|\nabla \phi_{t}^0\|^2 
	+ \frac{1}{\varepsilon} (f'(\phi^0)\phi_t^0,\phi_t^0) 
	\lesssim \varepsilon^{-\sigma_{4}},
	\end{equation}
	\begin{equation}\label{eq:AP:httHn2}
	\|\Delta^{-1}\phi_{tt}^0\|^2
	\lesssim \varepsilon^{-\sigma_{5}},
	\end{equation}
	\begin{equation}\label{eq:AP:httHn1}
	\|\phi_{tt}^0\|_{-1}^2
	\lesssim \varepsilon^{-\sigma_{6}}.
	\end{equation}\qed
\end{assump}

Given Assumption \ref{ap:1} (i)(iii) and Assumption \ref{ap:2}, we have
following estimates for the exact solution to the Cahn-Hilliard equation.
The proof is given in Appendix A. 

\begin{lemma}\label{lm:reg} 
	Suppose Assumption \ref{ap:1} (i)(iii) and Assumption \ref{ap:2} are satisfied. We have following regularity results for the exact solution $\phi$ of (\ref{eq:CH}) with $\gamma = 1$.
	\begin{enumerate}
		\item[(i)]
		$\int_{0}^{\infty}\|\phi_t\|_{-1}^{2}{\rm d}t + \esssup \limits_{t\in[0,\infty]}
		E_{\varepsilon}(\phi) \lesssim \varepsilon^{-\rho_1}$,
		and $\| \phi \|_1^2 \lesssim \varepsilon^{-(\sigma_1+1)}$;
		
		\item[(ii)]
		$   \esssup \limits_{t\in[0,\infty]}  \|\phi_{t}\|_{-1}^2+ \varepsilon\int_{0}^{\infty} \|\nabla \phi_{t}\|^2{\rm d} t
		\lesssim \varepsilon^{-\rho_2} $;
		
		\item[(iii)]
		$\esssup \limits_{t\in[0,\infty]}\|\phi_t\|^2 + \varepsilon \int_{0}^{\infty} \|\Delta \phi_t\|^2 {\rm d} t
		\lesssim \varepsilon^{- \rho_3}$;
		
		\item[(iv)]
		$ \int_{0}^{\infty} \|\phi_{tt}\|_{-1}^2 {\rm  d} t
		+  \esssup \limits_{t\in[0,\infty]} \varepsilon\|\nabla \phi_{t}\|^2
		\lesssim \varepsilon^{-\rho_4}$;
		
		\item[(v)]
		$\esssup \limits_{t\in[0,\infty]}  \| \Delta^{-1}\phi_{tt}\|^2 + \varepsilon\int_{0}^{\infty} \| \phi_{tt} \|^2 {\rm d} t
		\lesssim \varepsilon^{-\rho_5}$;
		
		\item[(vi)]
		$\int_{0}^{\infty} \|\Delta^{-1}\phi_{ttt}\|_{-1}^2 {\rm d} t
		+ \esssup \limits_{t\in[0,\infty]} \varepsilon \| \phi_{tt}\|_{-1}^2 \lesssim \varepsilon^{-\rho_6}$;
	\end{enumerate}
	where $\rho_1=\sigma_1$ and
	\begin{equation*}
	\begin{split}
	\rho_2&=\max\{\sigma_1+3, \sigma_2\}, \\
	\rho_3&=\max\{ (\sigma_1+1)(p-2)+\rho_2+4, \sigma_3\}, \\
	\rho_4&=\max\{ \rho_2+2+\tfrac12\rho_3+\tfrac12(\sigma_1+1)(p-3)^+, \sigma_4\}, \\
	\rho_5&=\max\{ \rho_2+\rho_4+1+(\sigma_1+1)(p-3)^+, (\sigma_1+1)(p-2)+\rho_4+3, \sigma_5\}, \\
	\rho_6&=\max\{ (\sigma_1+1)(p-2)+\rho_5+3, \sigma_6-1\}.
	\end{split}
	\end{equation*}	\qed
	
\end{lemma}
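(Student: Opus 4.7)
The plan is to apply the standard energy method successively to the Cahn--Hilliard equation and its time derivatives, with careful tracking of the $\varepsilon$-dependence. Each successive estimate (i)--(vi) is produced by differentiating the equation one more time in $t$ and pairing with a suitable dual-weight multiplier; the residue coming from the nonlinearity is then controlled via the growth conditions \eqref{eq:AP:fp}--\eqref{eq:AP:fpp}, Sobolev/Gagliardo--Nirenberg embeddings, and the preceding estimate in the chain. The uniform-in-$t$ norms are obtained by a uniform Gronwall argument driven by the integral bound already established at the previous stage.

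For (i), pairing $\phi_t=\Delta\mu$ with $\mu=-\varepsilon\Delta\phi+\varepsilon^{-1}f(\phi)$ recovers the standard identity $\tfrac{d}{dt}E_\varepsilon(\phi)+\|\phi_t\|_{-1}^2=0$; integrating and using \eqref{eq:AP:E0} gives $\rho_1=\sigma_1$, and the $H^1$-bound on $\phi$ follows from $\tfrac{\varepsilon}{2}\|\nabla\phi\|^2\le E_\varepsilon(\phi)$ combined with the coercivity \eqref{eq:AP:Fcoercive}. For (ii), I would differentiate once and pair $\phi_{tt}=\Delta\mu_t$ with $(-\Delta)^{-1}\phi_t$, which yields
\begin{equation*}
\tfrac12\tfrac{d}{dt}\|\phi_t\|_{-1}^2+\varepsilon\|\nabla\phi_t\|^2+\tfrac{1}{\varepsilon}(f'(\phi)\phi_t,\phi_t)=0.
\end{equation*}
The sign-indefinite $f'$-term is bounded below via \eqref{eq:AP:fpl}, and the interpolation $\|u\|^2\le\|u\|_{-1}\|\nabla u\|$ converts this into a differential inequality of the form $\tfrac{d}{dt}\|\phi_t\|_{-1}^2+\varepsilon\|\nabla\phi_t\|^2\lesssim\varepsilon^{-3}\|\phi_t\|_{-1}^2$. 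Combining the uniform Gronwall lemma with the integrability from (i) produces $\rho_2=\max\{\sigma_1+3,\sigma_2\}$.

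Parts (iii)--(vi) follow the same template with different multipliers: test with $\phi_t$ for (iii) (producing $\varepsilon\|\Delta\phi_t\|^2$ on the left), with $\mu_t$ for (iv) (using the identity $\|\phi_{tt}\|_{-1}=\|\nabla\mu_t\|$, which brings in the correction $\varepsilon^{-1}(f''(\phi)\phi_t^2,\phi_t)$), and then a further time derivative with multipliers $\phi_{tt}$ and $(-\Delta)^{-1}\phi_{tt}$ for (v)--(vi). The growth bound \eqref{eq:AP:fp} combined with Gagliardo--Nirenberg and the $H^1$-bound from (i) controls $f'(\phi)$ in $L^q$ for the required $q$; this is the origin of the factors $(\sigma_1+1)(p-2)$ in $\rho_3$ and the later $\rho_i$. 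The growth bound \eqref{eq:AP:fpp}, together with the restriction $p\le 3+d/(3(d-2))$ that makes the relevant embedding critical-admissible in dimension $d$, is what bounds the $f''$-terms and produces the $(p-3)^+$ factors in $\rho_4,\rho_5,\rho_6$.

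The main obstacle throughout is to keep the constants polynomial in $1/\varepsilon$ rather than exponential. At every step, one must split the nonlinear residue into (a) a portion that is absorbed into the principal dissipation $\varepsilon\|\nabla(\cdot)\|^2$ on the left, and (b) a portion whose $L^1_t$-norm has already been controlled at the previous stage, so that the Gronwall kernel itself is integrable in time with a polynomial-in-$\varepsilon^{-1}$ coefficient. Iterating this absorb-and-integrate scheme six times, with careful accounting of the Sobolev exponents coming from each application of Gagliardo--Nirenberg and each appearance of the $H^1$-bound on $\phi$, yields precisely the recursive definition of $\rho_2,\ldots,\rho_6$ stated in the lemma.
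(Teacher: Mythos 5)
Your overall strategy is exactly the paper's: estimate the Cahn--Hilliard equation and its successive time derivatives with negative-order multipliers, control the nonlinearity through \eqref{eq:AP:fp}--\eqref{eq:AP:fpp}, Sobolev embedding and the $H^1$ bound from (i), and close each stage by integrating against a time-integral already controlled at the previous stage. Your identities for (i)--(iv) and the resulting exponents all match the paper (testing with $\mu_t$ in (iv) is the same as the paper's $-\Delta^{-1}\phi_{tt}$ up to a constant, and you correctly identify the time integration by parts that produces the $\varepsilon^{-1}(f''(\phi)\phi_t^2,\phi_t)$ correction and explains the form of \eqref{eq:AP:LEt0}).

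Two points of caution. First, the multipliers you name for (v) and (vi) --- $\phi_{tt}$ and $(-\Delta)^{-1}\phi_{tt}$ applied to the twice-differentiated equation --- do not produce the quantities in the statement and would not close: pairing $\phi_{ttt}+\varepsilon\Delta^2\phi_{tt}=\varepsilon^{-1}\Delta(f''(\phi)\phi_t^2+f'(\phi)\phi_{tt})$ with $\phi_{tt}$ yields the dissipation $\varepsilon\|\Delta\phi_{tt}\|^2$ and a nonlinear term $-\varepsilon^{-1}(\nabla(f''(\phi)\phi_t^2+f'(\phi)\phi_{tt}),\nabla\phi_{tt})$ that cannot be absorbed with the regularity available from (i)--(iv). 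The estimates (v) and (vi) are deliberately very weak (they are what is needed to bound the consistency residuals $R_1^{n+1}$, $R_2^{n+1}$ in the error analysis), and the multipliers are forced to be $\Delta^{-2}\phi_{tt}$ for (v) and $-\Delta^{-3}\phi_{ttt}$ for (vi). Second, your invocation of a ``uniform Gronwall lemma'' at stage (ii) is misleading: a multiplicative Gronwall applied to $\frac{d}{dt}\|\phi_t\|_{-1}^2\lesssim \varepsilon^{-3}\|\phi_t\|_{-1}^2$ would produce a factor $\exp(C\varepsilon^{-3}T)$, which is exactly the exponential dependence the lemma is designed to avoid. The paper never applies Gronwall here; it integrates the differential inequality directly and bounds $\varepsilon^{-3}\int_0^\infty\|\phi_t\|_{-1}^2\,dt$ by the stage-(i) estimate. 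Your closing paragraph describes this absorb-and-integrate mechanism correctly, so the issue is one of phrasing rather than substance, but it is worth being precise since the whole point of the lemma hinges on it.
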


To get the convergence result of the second order schemes, we need make some assumptions on the 
scheme used to calculate the numerical solution at first time step. 

\begin{assump}\label{ap:3}
We assume that an appropriate scheme is used to
calculate the numerical solution at first step, such
that
\begin{equation}\label{eq:AP:m1}
m_1:=\frac{1}{|\Omega|}\int_{\Omega}\phi^{1}(x){\rm d}x = m_0,
\end{equation}
\begin{equation}\label{eq:AP:phi1}
  E_\varepsilon (\phi^1) \le E_\varepsilon (\phi^0) \lesssim\varepsilon^{-\sigma_1 },
\end{equation}
\begin{equation} \label{eq:AP:phi1Hn1} 
  \frac{1}{\tau}\| \phi^1
  - \phi^0 \|^2_{-1}  \lesssim
  \varepsilon^{-\sigma_1 },
\end{equation}
\begin{equation} \label{eq:AP:phi1L2} 
\frac{1}{\tau}\|\phi^1-\phi^0\|^2 \lesssim\varepsilon^{-\sigma_1 -2},
\end{equation}
and exist a constant $0<\tilde{\sigma}_1<\max\{\rho_6 +4,\rho_4+6,\rho_2+9\}$ and
constant $C_1$ independent of $\tau,\varepsilon$, such that
\begin{equation}\label{eq:AP:phi1e}
	\| e^1 \|^2_{-1} + \tau\varepsilon\| \nabla e^1 \|^2 \le C_1 \varepsilon^{-\tilde{\sigma}_1}\tau^4.
\end{equation}\qed
\end{assump}


Following volume conservation property is easy to prove but 
important to the error estimate. Because of
the integration of $\phi^n$ is conserved, $\delta_t \phi^n$ and $e^n$ belong to $L_0^2(\Omega)$ such that we can define $H^{-1}$ norm and
use Poincare's inequality for those quantities.

\begin{lemma}\label{lm:stab}
	Suppose \eqref{eq:AP:m0} and \eqref{eq:AP:m1} holds, then
	the numerical solution of \eqref{eq:BDF:1}-\eqref{eq:BDF:2}
	satisfies
	\begin{equation}\label{eq:CA:conserv}
	\frac{1}{|\Omega|}\int_{\Omega}\phi^{n}(x){\rm d}x=m_0,\quad n=1,\ldots, N+1, 
	\end{equation}
	and the error function $e^n$ satisfies
	\begin{equation}\label{eq:CA:e0}
	\int_{\Omega}e^{n}(x){\rm d}x=0,\quad n=1,\ldots, N+1. 
	\end{equation}\qed
\end{lemma}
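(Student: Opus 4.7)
The plan is to establish \eqref{eq:CA:conserv} by a direct induction argument relying on the Neumann boundary condition for $\mu$, and then deduce \eqref{eq:CA:e0} by subtracting against the exact-solution volume conservation \eqref{eq:CH:vol}. In fact, the bulk of the work already appears in step 1) of the proof of Theorem~\ref{ac1}; the lemma simply records the statement in the form needed for the error analysis.

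For \eqref{eq:CA:conserv}, I would integrate \eqref{eq:BDF:1} over $\Omega$. The right-hand side $\gamma\int_\Omega \Delta\mu^{n+1}\,dx$ vanishes after integration by parts thanks to the homogeneous Neumann condition $\partial_n \mu^{n+1}=0$ inherited from \eqref{eq:CH:nbc}, so
\begin{equation*}
 \int_\Omega \bigl(3\phi^{n+1} - 4\phi^n + \phi^{n-1}\bigr)\,dx = 0, \qquad n=1,\ldots,N.
\end{equation*}
Combined with the initial mass conditions \eqref{eq:AP:m0} and \eqref{eq:AP:m1}, which give $\int_\Omega \phi^0\,dx = \int_\Omega \phi^1\,dx = m_0|\Omega|$, an induction on $n$ yields $\int_\Omega \phi^{n+1}\,dx = \tfrac{1}{3}(4 m_0|\Omega| - m_0|\Omega|) = m_0|\Omega|$, establishing the claim for all $n=1,\ldots,N+1$.

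For \eqref{eq:CA:e0}, I would invoke the volume conservation property \eqref{eq:CH:vol} of the exact solution, which states $\int_\Omega \phi(\cdot,t^n)\,dx = \int_\Omega \phi_0\,dx = m_0|\Omega|$, and subtract it from \eqref{eq:CA:conserv}:
\begin{equation*}
 \int_\Omega e^n(x)\,dx = \int_\Omega \phi^n\,dx - \int_\Omega \phi(\cdot,t^n)\,dx = m_0|\Omega| - m_0|\Omega| = 0.
\end{equation*}

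There is no substantive obstacle here: the proof is a routine verification that boils down to testing the scheme against the constant function $1$. The slight subtlety worth flagging is that both initial conditions must share the same mean $m_0$ (guaranteed by Assumption \ref{ap:3}); without this matching the two-step recursion from BDF2 would fail to preserve the mean. The real payoff of the lemma is structural rather than technical: it ensures $\delta_t\phi^n, e^n \in L^2_0(\Omega)$, so that the $H^{-1}$ norm and the Poincar\'e inequality on zero-mean functions are available throughout the subsequent error analysis.
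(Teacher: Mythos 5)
Your proof is correct and follows the same route the paper itself uses (the argument is essentially step 1) of the proof of Theorem~\ref{ac1}: integrate the scheme over $\Omega$, use the Neumann condition on $\mu$ to kill the right-hand side, and close the two-step recursion with the matched initial means, then subtract the exact volume conservation \eqref{eq:CH:vol} to get \eqref{eq:CA:e0}). No gaps.
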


Now, we present our first error estimate result, which is a coarse estimate obtained by a standard approach.
\begin{theorem}(Coarse error estimate)\label{prio}
  Suppose Assumption \ref{ap:1} (i)(ii), Assumption \ref{ap:2} and Assumption \ref{ap:3} hold. Then
  $\forall\, \tau \leq1$, following error estimate holds for the 
   SL-BDF2 scheme \eqref{eq:BDF:1}-\eqref{eq:BDF:2}:
  \begin{equation}\label{eq:ES:coarse1}
    \begin{split}
      &\|e^{n+1}\|_{-1}^{2} +\|2e^{n+1}-e^{n}\|_{-1}^{2}
      +2A\tau^{2}\|\nabla e^{n+1}\|^{2}\\
      &+2A\tau^2\|\delta_{t}\nabla e^{n+1}\|^{2}
      	+\tau\varepsilon\|\nabla e^{n+1}\|^{2}
      	+\|\delta_{tt}e^{n+1}\|_{-1}^{2}
      	+4B\tau\|e^{n+1}\|^2\\
      \leq{} &
      \|e^{n}\|_{-1}^{2}+\|2e^{n}-e^{n-1}\|_{-1}^{2}
      +2A\tau^{2}\|\nabla e^{n}\|^{2}\\
      &+C_{2}\tau\varepsilon^{-3}
      \|2e^{n}-e^{n-1}\|_{-1}^{2}
      +C_{3}\tau^{4}\varepsilon^{-\max\{\rho_6 +1,\rho_4+3,\rho_2+6\}},
      \quad n\ge 1,
    \end{split}
  \end{equation}
  and
    \begin{equation}\label{eq:ES:coarse2}
    \begin{split}
    &\max_{1\le n\le N}\left\{\|e^{n+1}\|_{-1}^{2} +\|2e^{n+1}-e^{n}\|_{-1}^{2}
    +2A\tau^{2}\|\nabla e^{n+1}\|^{2}\right\}\\
    &+\sum_{n=1}^{N}\left(2A\tau^2\|\delta_{t}\nabla e^{n+1}\|^{2}
    +\tau\varepsilon\|\nabla e^{n+1}\|^{2}
    +\|\delta_{tt}e^{n+1}\|_{-1}^{2}
    +4B\tau\|e^{n+1}\|^2\right)\\
    \le &
    \exp(C_{2}\varepsilon^{-3}T)
    \left(C_{3}\varepsilon^{-\max\{\rho_6 +1,\rho_4+3,\rho_2+6\}}
    + C_1(5 + 2A\varepsilon^{-1}\tau)\varepsilon^{-\tilde{\sigma}_1}
    \right)\tau^{4},
    \end{split}
    \end{equation}
  where $C_{2}, C_{3}$ are two constants
  that can be uniformly bounded independent of $\varepsilon$ and $\tau$.  
\end{theorem}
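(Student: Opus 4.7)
The plan is a discrete energy argument in the $H^{-1}$ norm, modelled on the stability proof of Theorem \ref{ac1} but now tracking the error. First I would subtract the exact equation (with $\gamma = 1$) from \eqref{eq:BDF:1}--\eqref{eq:BDF:2} to obtain
\begin{equation*}
  D_\tau e^{n+1} = \Delta\bigl(\mu^{n+1} - \mu(t^{n+1})\bigr) - R_1^{n+1},\qquad R_1^{n+1} := D_\tau\phi(t^{n+1}) - \phi_t(t^{n+1}),
\end{equation*}
with
\begin{equation*}
  \mu^{n+1} - \mu(t^{n+1}) = -\varepsilon\Delta e^{n+1} + \tfrac{1}{\varepsilon}\bigl(f(\hat\phi^{n+1}) - f(\phi(t^{n+1}))\bigr) - A\tau\Delta\delta_t\phi^{n+1} + B\delta_{tt}\phi^{n+1}.
\end{equation*}
Then I would test the resulting error equation against $(-\Delta)^{-1}e^{n+1}$, which is legitimate because $e^{n+1}\in L_0^2(\Omega)$ by Lemma \ref{lm:stab}, and multiply by $4\tau$. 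The BDF2 identity \eqref{eq:ID:2} applied in the $H^{-1}$ inner product produces the telescoping energies $\|e^{n+1}\|_{-1}^2 + \|2e^{n+1}-e^n\|_{-1}^2$ and the dissipation $\|\delta_{tt}e^{n+1}\|_{-1}^2$, while integrating the $-\varepsilon\Delta e^{n+1}$ piece by parts produces $4\tau\varepsilon\|\nabla e^{n+1}\|^2$.

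Next I would unpack each stabilization and truncation term. Writing $\delta_t\phi^{n+1} = \delta_t e^{n+1} + \delta_t\phi(t^{n+1})$, the identity \eqref{eq:ID:1} applied to $-4A\tau^2(\nabla\delta_t e^{n+1},\nabla e^{n+1})$ produces the telescoping energy $2A\tau^2\|\nabla e^{n+1}\|^2$ and the dissipation $2A\tau^2\|\nabla\delta_t e^{n+1}\|^2$; the residual against $\nabla\delta_t\phi(t^{n+1})$ is controlled by $\tau^4\sup\|\nabla\phi_t\|^2$, bounded through Lemma \ref{lm:reg}(iv). The $B\delta_{tt}\phi^{n+1}$ contribution is split similarly, and Cauchy--Schwarz plus Young's inequality absorbs the error $\|e^{n+1}\|^2$ piece into the $4B\tau\|e^{n+1}\|^2$ on the left while dumping the exact part into a $\|\delta_{tt}\phi(t^{n+1})\|^2$-truncation bounded via Lemma \ref{lm:reg}(v). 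Finally, the consistency term $(R_1^{n+1},(-\Delta)^{-1}e^{n+1})$ is estimated by $\|R_1^{n+1}\|_{-1}\|e^{n+1}\|_{-1}$, with $\|R_1^{n+1}\|_{-1}^2$ of size $\tau^3\int\|\Delta^{-1}\phi_{ttt}\|_{-1}^2$, controlled by Lemma \ref{lm:reg}(vi).

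The crux is the nonlinear term $-\frac{4\tau}{\varepsilon}\bigl(f(\hat\phi^{n+1}) - f(\phi(t^{n+1})),e^{n+1}\bigr)$. Decomposing $\hat\phi^{n+1} - \phi(t^{n+1}) = (2e^n - e^{n-1}) + r^{n+1}$ with $r^{n+1}=2\phi(t^n)-\phi(t^{n-1})-\phi(t^{n+1})=O(\tau^2)$, and invoking Assumption \ref{ap:1}(ii) so that $|f(\hat\phi^{n+1}) - f(\phi(t^{n+1}))|\le L\,|\hat\phi^{n+1}-\phi(t^{n+1})|$, I would apply Cauchy--Schwarz, the Gagliardo--Nirenberg-type interpolation $\|u\|^2\lesssim\|\nabla u\|\,\|u\|_{-1}$ valid on $L_0^2$, and Young's inequality distributing four factors. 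This produces a bound of the form
\begin{equation*}
  \alpha\tau\varepsilon\bigl(\|\nabla e^{n+1}\|^2+\|\nabla(2e^n-e^{n-1})\|^2\bigr) + C\tau\varepsilon^{-3}\bigl(\|e^{n+1}\|_{-1}^2+\|2e^n-e^{n-1}\|_{-1}^2\bigr) + C\tau^5\varepsilon^{-M},
\end{equation*}
where the $\varepsilon^{-3}$ prefactor emerges from AM--GM balancing the $\tfrac{1}{\varepsilon}$ in front of $f$ against half-powers of $\varepsilon$ taken from the $\|\nabla\cdot\|$ dissipations. Choosing $\alpha$ small enough to absorb the gradient terms into $4\tau\varepsilon\|\nabla e^{n+1}\|^2$ at the current step and $\tau\varepsilon\|\nabla e^n\|^2$ at the previous step yields \eqref{eq:ES:coarse1}, with $C_3$ collecting all $\tau^5\varepsilon^{-M}$ truncation contributions. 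Summing from $n=1$ to $N$, using \eqref{eq:AP:phi1e} from Assumption \ref{ap:3} to start the induction at $n=1$, and applying discrete Gronwall's inequality (valid when $C_2\tau\varepsilon^{-3}<1$) then gives \eqref{eq:ES:coarse2}.

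The principal obstacle is engineering the Gronwall term in the weaker $H^{-1}$ norm: a direct $L^2$ pairing on the nonlinearity would give $\|2e^n-e^{n-1}\|^2$ on the right with a Gronwall factor $\exp(C\varepsilon^{-M}T)$ for a much larger $M$, which would destroy the usefulness at small $\varepsilon$. The interpolation-plus-four-factor-Young combination is exactly what preserves $\exp(C_2\varepsilon^{-3}T)$. A secondary subtlety is keeping track of which truncation dominates: the $\phi_{ttt}$ contribution (tied to $\rho_6$) competes with the $\phi_{tt}$ and $\phi_t$ contributions (tied to $\rho_4,\rho_2$), which explains the maximum $\max\{\rho_6+1,\rho_4+3,\rho_2+6\}$ in the exponent of $C_3$.
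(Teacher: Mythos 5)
Your skeleton is the same as the paper's: form the error equations, pair the first with $(-\Delta)^{-1}e^{n+1}$ and the second with $-e^{n+1}$, telescope via \eqref{eq:ID:2} in the $H^{-1}$ inner product and \eqref{eq:ID:1} for the $A\tau$ and $B$ terms, estimate the consistency residuals by Taylor expansion together with Lemma \ref{lm:reg}(ii),(iv),(vi), and close with a discrete Gronwall inequality seeded by \eqref{eq:AP:phi1e}. The identification of which regularity exponent feeds which term (and hence the $\max\{\rho_6+1,\rho_4+3,\rho_2+6\}$) is also correct; the only cosmetic difference there is that the paper keeps the local integrals $\int_{t_{n-1}}^{t_{n+1}}$ from \eqref{eq:R1}--\eqref{eq:R3} rather than sup-norms, which costs nothing in $\tau$ but gives a slightly sharper $\varepsilon$-power for the $\nabla R_3$ contribution ($\rho_2+6$ rather than $\rho_4+6$).

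The one genuine divergence is your treatment of the nonlinear term, and it does not deliver the theorem as stated. The paper (see \eqref{eq:J5}) keeps the pairing $\frac{1}{\varepsilon}(f(2\phi^n-\phi^{n-1})-f(\phi(t^{n+1})),e^{n+1})$ intact, uses the Lipschitz bound and the $H^{-1}$--$H^1$ duality so that \emph{all} the gradient weight lands on $e^{n+1}$ (absorbed into $\varepsilon\|\nabla e^{n+1}\|^2$ on the left with $\eta_1=\varepsilon/2$) and \emph{all} the $H^{-1}$ weight lands on $2e^n-e^{n-1}$ (the Gronwall term, with prefactor $\frac{L^2}{\varepsilon^2\eta_1}\sim\varepsilon^{-3}$). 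Your route --- $L^2$ Cauchy--Schwarz, then the interpolation $\|u\|^2\lesssim\|\nabla u\|\,\|u\|_{-1}$ on both factors, then a four-factor Young --- reaches the same $\varepsilon^{-3}$, but at the price of two extra right-hand-side terms: $\alpha\tau\varepsilon\|\nabla(2e^n-e^{n-1})\|^2$ and $C\tau\varepsilon^{-3}\|e^{n+1}\|_{-1}^2$. Neither appears in \eqref{eq:ES:coarse1}. The first cannot be absorbed at the level of a single step because the right-hand side of \eqref{eq:ES:coarse1} carries no $\tau\varepsilon\|\nabla e^{n}\|^2$ to telescope against; you would have to weaken the per-step inequality and recover it only after summation. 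The second forces the absorption $(1-C\tau\varepsilon^{-3})\|e^{n+1}\|_{-1}^2\geq\cdots$, i.e.\ an extra restriction $\tau\lesssim\varepsilon^{3}$, which you yourself note but which the theorem does not impose (it requires only $\tau\le 1$). So your argument proves a weaker statement: \eqref{eq:ES:coarse2} under a step-size constraint, and \eqref{eq:ES:coarse1} only in a modified form. The fix is simply to not interpolate: estimate $(2e^n-e^{n-1}-R_2^{n+1},\,e^{n+1})$ by $\|2e^n-e^{n-1}-R_2^{n+1}\|_{-1}\|\nabla e^{n+1}\|$ directly, as in \eqref{eq:J5} and \eqref{eq:lm6}, so that the current-level error only ever appears through $\|\nabla e^{n+1}\|^2$, which the left-hand side controls unconditionally.
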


\begin{proof}
  The following equations for the error functions hold:
	  \begin{equation}\label{eq:eoe1}
	  D_{\tau}e^{n+1}
	  =\Delta(\mu^{n+1}-\mu(t^{n+1}))
	  +R_1^{n+1},
	\end{equation}
	  \begin{equation}\label{eq:eoe2}
	  \begin{split}
	  \mu^{n+1}-\mu(t^{n+1})=&-\varepsilon\Delta e^{n+1}+\frac{1}{\varepsilon}[f(2\phi^{n}-\phi^{n-1})-f(\phi(t^{n+1}))]\\
	  &-A\tau\Delta \delta_{t}e^{n+1}
	  +B\delta_{tt}e^{n+1}
	  -A \Delta R_{3}^{n+1} +B R_{2}^{n+1},
	  \end{split}
	  \end{equation}
	  where the residual terms are
	  \begin{align*}
	  R_{1}^{n+1}&=\phi_{t}(t^{n+1})-D_{\tau}\phi(t^{n+1}),\\
	  R_{2}^{n+1}&=\delta_{tt}\phi(t^{n+1})=\phi(t^{n+1})-2\phi(t^{n})+\phi(t^{n-1}),\\
	  R_{3}^{n+1}&=\tau\delta_t\phi(t^{n+1})=\tau(\phi(t^{n+1})-\phi(t^{n})).
	  \end{align*} 	
 Pairing (\ref{eq:eoe1}) with $-\Delta^{-1} e^{n+1}$, adding
	  (\ref{eq:eoe2}) paired with $ -e^{n+1}$, we get
	  \begin{equation}\label{eq:eoee}
	  \begin{split}
	  &(D_{\tau}e^{n+1},-\Delta^{-1}e^{n+1})
	  +\varepsilon\|\nabla e^{n+1}\|^{2}
	  +A\tau(\delta_{t}\nabla e^{n+1},\nabla e^{n+1})\\
	  ={}&(R_{1}^{n+1},-\Delta ^{-1 } e^{n+1}) 
	  -B(R_{2}^{n+1},e^{n+1})
	  -A(\nabla R_{3}^{n+1},\nabla e^{n+1})\\
	  &-B(\delta_{tt}e^{n+1},e^{n+1})
	  -\frac{1}{\varepsilon}\left(f(2\phi^{n}-\phi^{n-1})-f(\phi(t^{n+1})),e^{n+1}\right)\\
	  =&:J_{1}+J_{2}+J_{3}+J_{4}+J_{5}.
	  \end{split}
	  \end{equation}
	  First,  for the terms on the left side of (\ref{eq:eoee}), we have
	  \begin{multline}\label{l1}
	  (D_{t}e^{n+1},-\Delta^{-1}e^{n+1})=
	  \frac{1}{4\tau}(\|e^{n+1}\|_{-1}^{2}+\|2e^{n+1}-e^{n}\|_{-1}^{2})\\
	  -\frac{1}{4\tau}(\|e^{n}\|_{-1}^{2}+\|2e^{n}-e^{n-1}\|_{-1}^{2})
	  +\frac{1}{4\tau}\|\delta_{tt}e^{n+1}\|_{-1}^{2},
	  \end{multline}
	  and
	  \begin{equation}\label{12}
	  A\tau(\delta_{t}\nabla e^{n+1},\nabla e^{n+1})
	  =\frac{1}{2}A\tau(\|\nabla e^{n+1}\|^{2}-\|\nabla e^{n}\|^{2}+\|\delta_{t}\nabla e^{n+1}\|^{2}).
	  \end{equation}
Then, we estimate the terms on the right hand side of \eqref{eq:eoee}.
	\begin{equation}\label{eq:J1}
	\begin{split}
	J_{1}=(R_{1}^{n+1},-\Delta^{-1} e^{n+1})
	\leq
	\frac{1}{\eta_1}\|\Delta^{-1}R_{1}^{n+1}\|_{-1}^{2}
	+ \frac{\eta_1}{4}\|\nabla e^{n+1}\|^{2}.
	\end{split}
	\end{equation}
	\begin{equation}\label{eq:J2}
	\begin{split}
	J_{2}=-B(R_{2}^{n+1},e^{n+1})
	\leq
	\frac{B^{2}}{\eta_1}\|R_{2}^{n+1}\|_{-1}^{2}
	+ \frac{\eta_1}{4}\|\nabla e^{n+1}\|^{2}.
	\end{split}
	\end{equation}
	\begin{equation}\label{eq:J3}
	\begin{split}
	J_{3}&=-A(\nabla R_{3}^{n+1},\nabla e^{n+1})
	\leq \frac{A^{2}}{\eta_1}\|\nabla
	R_{3}^{n+1}\|^{2}
	+ \frac{\eta_1}{4}\|\nabla e^{n+1}\|^{2}.
	\end{split}
	\end{equation}
	\begin{equation}\label{eq:J4}
		\begin{split}
		J_{4}&=-B(\delta_{tt}e^{n+1},e^{n+1})
		=-B(e^{n+1}-(2e^n-e^{n-1}),e^{n+1})\\
		& \leq -B\|e^{n+1}\|^2
		+\frac{B^2}{\eta_1}\|2e^n-e^{n-1}\|^2_{-1} 
		+\frac{\eta_1}{4}\|\nabla e^{n+1}\|^2.
		\end{split}
	\end{equation}
	\begin{equation}\label{eq:J5}
	\begin{split}
	J_{5}&=-\frac{1}{\varepsilon}\left(f(2\phi^{n}-\phi^{n-1})-f(\phi(t^{n+1})),e^{n+1}\right)\\
	& \leq \frac{L}{\varepsilon}\left(|2\phi^{n}-\phi^{n-1}-\phi(t^{n+1})|,|e^{n+1}|\right) \\
	& = \frac{L}{\varepsilon}\left(|2e^{n}-e^{n-1}-\delta_{tt}\phi(t^{n+1})|,|e^{n+1}|\right) \\
	&\leq \frac{L^{2}}{\varepsilon^{2}\eta_1}\|2e^n-e^{n-1}\|_{-1}^{2}
	+\frac{L^{2}}{\varepsilon^{2}\eta_1}\|R_{2}^{n+1}\|_{-1}^{2}
	+ \frac{\eta_1}{2}\|\nabla e^{n+1}\|^{2}.
	\end{split}
	\end{equation}
Combining (\ref{eq:eoee})-(\ref{eq:J5}) together,  yields
	\begin{equation}\label{eq:lm6}
	\begin{split}
	&\frac{1}{4\tau}(\|e^{n+1}\|_{-1}^{2}+\|2e^{n+1}-e^{n}\|_{-1}^{2})
	+\frac{1}{2}A\tau\|\nabla e^{n+1}\|^{2}\\
	 &+\frac{1}{2}A\tau\|\delta_{t}\nabla e^{n+1}\|^{2}
	 +\varepsilon\|\nabla e^{n+1}\|^{2}
	 +\frac{1}{4\tau}\|\delta_{tt}e^{n+1}\|_{-1}^{2}
	+B\|e^{n+1}\|^2\\
	\leq{}& 
	\frac{1}{4\tau}(\|e^{n}\|_{-1}^{2}+\|2e^{n}-e^{n-1}\|_{-1}^{2})
	+\frac{1}{2}A\tau\|\nabla e^{n}\|^{2}\\
	&+\frac{1}{\eta_1}\|\Delta^{-1}R_{1}^{n+1}\|_{-1}^{2}
	+\left(B^2+\frac{L^{2}}{\varepsilon^{2}}\right)\frac{1}{\eta_1}\|R_{2}^{n+1}\|_{-1}^{2}
	+\frac{A^{2}}{\eta_1}\|\nabla
	R_{3}^{n+1}\|^{2}\\
	&+\left(B^2+\frac{L^{2}}{\varepsilon^{2}}\right)\frac{1}{\eta_1}\|2e^n-e^{n-1}\|^2_{-1}
	+ \frac{3}{2}\eta_1\|\nabla e^{n+1}\|^{2}.
	\end{split}
	\end{equation}
	  By using Taylor expansions in integral form, one can get estimates
	  for the residuals 
	  \begin{equation} \label{eq:R1}
	  \|\Delta^{-1}R_{1}^{n+1}\|_{-1}^{2}\leq
	  c_1\tau^{3}\int_{t_{n-1}}^{t_{n+1}}\|\partial_{ttt}\Delta^{-1}\phi(t)\|_{-1}^{2}{\rm
	  	d}t,
	  \end{equation}
	  \begin{equation}\label{eq:R2}
	  \|R_{2}^{n+1}\|_{-1}^{2}\leq
	  c_2\tau^{3}\int_{t_{n-1}}^{t_{n+1}}\|\partial_{tt}\phi(t)\|_{-1}^{2}{\rm d}t,
	  \end{equation}
	  \begin{equation} \label{eq:R3} 
	  \|\nabla R_{3}^{n+1}\|^{2}\leq
	  c_3\tau^{3}\int_{t_{n}}^{t_{n+1}}\|\partial_{t}\nabla
	  \phi(t)\|^{2}{\rm d}t,
	  \end{equation}
	  where $c_1,c_2$ and $c_3$ are three constants.
	  
Taking $\eta_1=\varepsilon/2$ in \eqref{eq:lm6} and combining the residual estimates 
\eqref{eq:R1}-\eqref{eq:R3} and Assumption \ref{lm:reg},\ref{ap:3} with equation \eqref{eq:lm6}, we get estimate
  \eqref{eq:ES:coarse1} with 
      \begin{equation*}
      C_2 = 8L^2 + 8B^2\varepsilon^2, \quad
      C_3 = 8c_1+ 8c_2(L^2+B^2\varepsilon^2) + 8c_3A^2\varepsilon^4.
      \end{equation*}
   By using a discrete Gronwall
    inequality, we obtain \eqref{eq:ES:coarse2}.\qed
\end{proof}

Theorem \ref{prio} is the usual error estimate, in which
the error growth depends on $1/\varepsilon$
exponentially. To obtain a finer estimate on the error, we
need to use a spectral estimate of the linearized
Cahn-Hilliard operator by Chen \cite{chen_spectrum_1994} for
the case when the interface is well developed in the
Cahn-Hilliard system.
\begin{lemma}\label{lem:spectrum}
  Let $\phi(t)$ be the exact solution of Cahn-Hilliard
  equation \eqref{eq:CH} with interfaces are well developed
  in the initial condition (i.e. conditions (1.9)-(1.15) in
  \cite{chen_spectrum_1994} are satisfied).  Then there
  exist $0<\varepsilon_{0} < 1$ and positive constant
  $C_{0}$ such that the principle eigenvalue of the
  linearized Cahn-Hilliard operator
  $\mathcal{L}_{CH}:=\Delta(\varepsilon\Delta-\frac{1}{\varepsilon}f'(\phi)I)$
  satisfies for all $t\in [0,T]$
	\begin{equation}\label{eq:AP:eigen}
	\lambda_{CH}=\inf_{\substack{0\neq v\in H^{1}(\Omega)\\ \Delta\mu=v}}
	\frac{\varepsilon\|\nabla v\|^{2}+\frac{1}{\varepsilon}(f'(\phi(\cdot,t))v,v)}{\|\nabla\mu\|^{2}}
	\geq-C_{0},
	\end{equation}
	for all $\varepsilon\in (0,\varepsilon_{0})$.
\end{lemma}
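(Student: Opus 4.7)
The plan is to recognize Lemma \ref{lem:spectrum} as a direct restatement of the main spectral estimate in Chen's 1994 paper \cite{chen_spectrum_1994}, so that the proof reduces to checking that the hypotheses of that paper (conditions (1.9)--(1.15) there) are inherited from the well-developed-interface assumption stated here, and then invoking Chen's theorem. To make the strategy concrete, I would sketch Chen's argument as follows.

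First I would set up interface-adapted geometry. Let $\Gamma(t)=\{\phi(\cdot,t)=0\}$ and let $d(x,t)$ be signed distance to $\Gamma$. Under the well-developed-interface hypothesis, in a tubular neighborhood of $\Gamma$ of width $O(\varepsilon|\log\varepsilon|)$ one has the matched expansion $\phi(x,t)\approx q(d(x,t)/\varepsilon)$, where $q$ is the 1D standing wave solving $q''=f(q)$, $q(\pm\infty)=\pm1$. The 1D operator $\mathcal{A}=-\partial_{yy}+f'(q(y))$ on $\mathbb{R}$ is nonnegative, has a simple zero eigenvalue with eigenfunction $q'$, and a positive spectral gap on $(q')^\perp$. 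In curvilinear coordinates $(s,r)$ with $s\in\Gamma$, $r=d$, this gives a fiberwise Poincar\'e inequality: any competitor $v\in H^1$ with $v=\Delta\mu$ splits as
\begin{equation*}
v(x)=\alpha(s)\,q'(r/\varepsilon)\chi(r)+v^{\perp}(x),
\end{equation*}
with $v^{\perp}$ fiberwise $L^2$-orthogonal to $q'$, after which $\varepsilon\|\nabla v^{\perp}\|^2+\frac{1}{\varepsilon}(f'(\phi)v^{\perp},v^{\perp})\ge c\varepsilon^{-1}\|v^{\perp}\|^2$ holds up to curvature corrections of $O(1)\|v^{\perp}\|^2$ coming from the Jacobian.

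The hard part, and the main obstacle, will be the tangential mode $\alpha(s)q'(r/\varepsilon)\chi(r)$, on which the fiberwise spectral gap is \emph{lost} and naive bounds would produce a negative contribution of size $\varepsilon^{-1}$ in the numerator. Two cancellations save the day: the identity $\int f''(q)(q')^3\,dy=0$ eliminates the leading $\varepsilon^{-1}$ term, reducing the deficit to $O(1)\|\alpha\|_{L^2(\Gamma)}^2$; and a dual layer estimate, obtained by testing $\mu$ against the localized mode and using $\int q'=2$ together with a trace inequality on $\Gamma$, shows that $\|\nabla\mu\|^2$ controls a matching weighted norm of $\alpha$ strong enough to absorb the $O(1)$ deficit and yield $\lambda_{CH}\ge -C_0$ uniformly in $\varepsilon$. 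This final denominator control is precisely where the Cahn--Hilliard structure (the outer $\Delta$, i.e.\ the $H^{-1}$ gradient-flow interpretation) is harder than its Allen--Cahn analogue, since the interface-localized mode does not vanish under $\Delta^{-1}$. Rather than reproducing these delicate layer trace estimates in detail, my proposed proof would verify that Assumption \ref{ap:2} together with the Cahn--Hilliard dynamics \eqref{eq:CH} imply Chen's standing assumptions, and then quote his theorem directly.
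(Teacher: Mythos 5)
Your proposal matches the paper's treatment: the paper gives no proof of Lemma \ref{lem:spectrum} at all, but simply states it as a direct quotation of the spectral estimate of Chen \cite{chen_spectrum_1994} under the well-developed-interface hypotheses (conditions (1.9)--(1.15) there). Your additional sketch of the internal layer-decomposition argument is a reasonable summary of Chen's machinery but is not required, since both you and the paper ultimately just invoke that theorem.
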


\begin{theorem}\label{thm:errorHn1}
  Suppose all of the Assumption \ref{ap:1}(i)(ii),\ref{ap:2}, \ref{ap:3} hold. Let time
  step $\tau$ satisfy the following constraint
  \begin{equation}\label{cet18-00}
      \tau \le \min \left\{ \frac{1}{4(C_0+L^2)},
	      \frac{\varepsilon^6}{12(B^2\varepsilon^2+L^2)},
        C_5\varepsilon^{(5+\frac{1}{2}\max\{\rho_6 +4,\rho_4+6,\rho_2+9\}+\frac{d-2}{8})\frac{8}{18-d}} 
      \right\},
  \end{equation}
  where $C_5$ is a constant can be bounded uniformly independent of $\tau$ and $\varepsilon$.
  Then the solution of (\ref{eq:BDF:1}) (\ref{eq:BDF:2})
  satisfies the following error estimate
   \begin{equation}\label{eq:ES:fine0}
   \begin{split}
   &\max_{1\le n\le N}\left\{\|e^{n+1}\|_{-1}^{2} +\|2e^{n+1}-e^{n}\|_{-1}^{2}
   +2A\tau^{2}\|\nabla e^{n+1}\|^{2}\right\}\\
   &+\sum_{n=1}^{N}\left(2A\tau^2\|\delta_{t}\nabla e^{n+1}\|^{2}
   +\tau\frac{\varepsilon^4}{2}\|\nabla e^{n+1}\|^{2}\right)\\
   \lesssim{} &
   \varepsilon^{-\max\{\rho_6 +4,\rho_4+6,\rho_2+9\}}\tau^{4}.
   \end{split}
   \end{equation}   
\end{theorem}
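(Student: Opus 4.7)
The plan is to refine the analysis that produced the coarse estimate \eqref{eq:ES:coarse1} by replacing the crude Lipschitz bound on $f$ used in the estimate of $J_5$ with a linearization around the exact solution to which the spectral inequality \eqref{eq:AP:eigen} applies. Starting from the same error equations \eqref{eq:eoe1}--\eqref{eq:eoe2} paired with $-\Delta^{-1} e^{n+1}$ and $-e^{n+1}$ respectively, all identities and bounds on $J_1,J_2,J_3,J_4$ from Theorem \ref{prio} are retained; only $J_5$ is rewritten. I Taylor-expand
\[
  f(2\phi^n-\phi^{n-1}) - f(\phi(t^{n+1})) = f'(\phi(t^{n+1}))\bigl[(2e^n-e^{n-1}) - R_2^{n+1}\bigr] + \mathcal{R}^{n+1},
\]
with quadratic remainder $\mathcal{R}^{n+1} = \tfrac12 f''(\xi^{n+1})(2\phi^n-\phi^{n-1}-\phi(t^{n+1}))^2$, and then use the identity $2e^n-e^{n-1} = e^{n+1} - \delta_{tt}e^{n+1}$ to isolate the diagonal contribution $-\tfrac{1}{\varepsilon}(f'(\phi(t^{n+1}))e^{n+1},e^{n+1})$.

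Next, since $e^{n+1}\in L_0^2$ by Lemma \ref{lm:stab}, I would combine this diagonal contribution with the left-hand side term $\varepsilon\|\nabla e^{n+1}\|^2$ and invoke the spectral estimate \eqref{eq:AP:eigen} (with $v=e^{n+1}$) to obtain
\[
  \varepsilon\|\nabla e^{n+1}\|^2 - \tfrac{1}{\varepsilon}\bigl(f'(\phi(t^{n+1}))e^{n+1},e^{n+1}\bigr) \ge -C_0 \|e^{n+1}\|_{-1}^{2}.
\]
This replaces the destabilizing $L^2/\varepsilon^2$ coefficient in the Gronwall factor of Theorem \ref{prio} with the harmless constant $C_0$. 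The off-diagonal linear cross-terms involving $\delta_{tt}e^{n+1}$ and $R_2^{n+1}$ are handled by Cauchy--Schwarz against a small fraction of $\varepsilon\|\nabla e^{n+1}\|^2$, together with the residual estimates \eqref{eq:R2} and the regularity bounds from Lemma \ref{lm:reg}; the identity $\delta_{tt}e^{n+1} = e^{n+1}-(2e^n-e^{n-1})$ lets me recycle the $\|2e^n-e^{n-1}\|_{-1}^{2}$ term already present in the coarse estimate.

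The main obstacle is the nonlinear remainder $\tfrac{1}{\varepsilon}(\mathcal{R}^{n+1},e^{n+1})$. Bounding it requires quantitative control of $(2e^n-e^{n-1}-R_2^{n+1})^2\,e^{n+1}$, for which $L^\infty$ or $L^p$ estimates on $2\phi^n-\phi^{n-1}-\phi(t^{n+1})$ are needed. The standard way is a bootstrap: insert the coarse estimate \eqref{eq:ES:coarse2} into a Gagliardo--Nirenberg interpolation of the form $\|v\|_{L^q}\lesssim \|v\|_{-1}^{\theta}\|\nabla v\|^{1-\theta}$, with exponents depending on $d$, so that the remainder is dominated by a fraction of $\tfrac{\varepsilon^4}{2}\|\nabla e^{n+1}\|^2$ plus a term absorbable into $\|e^{n+1}\|_{-1}^{2}$. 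Matching the powers of $\varepsilon$ lost to Gagliardo--Nirenberg against the $\tau^4 \varepsilon^{-\tilde\sigma_1}$ factor from the coarse estimate is exactly what forces the dimension-dependent step-size restriction $\tau\lesssim \varepsilon^{(5+\tfrac12\max\{\rho_6+4,\rho_4+6,\rho_2+9\}+\tfrac{d-2}{8})\cdot\tfrac{8}{18-d}}$ appearing in \eqref{cet18-00}, and this numerology is the most delicate part of the argument.

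Finally, collecting everything yields an inequality of the form
\[
  \mathcal{E}^{n+1} - \mathcal{E}^{n} + \tau\tfrac{\varepsilon^4}{2}\|\nabla e^{n+1}\|^{2}
  \le \tau\,(C_0+L^2)\bigl(\|e^{n+1}\|_{-1}^{2}+\|2e^n-e^{n-1}\|_{-1}^{2}\bigr) + C\tau^{5}\varepsilon^{-\max\{\rho_6+4,\rho_4+6,\rho_2+9\}},
\]
where $\mathcal{E}^{n+1}$ collects the positive quantities on the left of \eqref{eq:ES:coarse1}. Under the smallness constraint $\tau\le 1/(4(C_0+L^2))$, the $\|e^{n+1}\|_{-1}^{2}$ contribution on the right is absorbed on the left, the step restriction $\tau\le \varepsilon^6/(12(B^2\varepsilon^2+L^2))$ absorbs the auxiliary terms that grew with $\varepsilon^{-2}$, and the discrete Gronwall inequality applied on $[0,T]$ together with the initial bound \eqref{eq:AP:phi1e} produces the desired polynomial-in-$1/\varepsilon$ estimate \eqref{eq:ES:fine0}.
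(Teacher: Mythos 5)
Your overall architecture (linearize the nonlinearity around the exact solution, apply the spectral estimate \eqref{eq:AP:eigen} to kill the $L^2/\varepsilon^2$ Gronwall factor, control the cubic remainder by interpolation between $H^{-1}$ and $H^1$, then absorb and apply discrete Gronwall) matches the paper's, and your splitting $f(2\phi^n-\phi^{n-1})-f(\phi(t^{n+1}))=f'(\phi(t^{n+1}))[(2e^n-e^{n-1})-R_2^{n+1}]+\mathcal{R}^{n+1}$ is a legitimate variant of the paper's two-stage split $J_5=J_6+J_7$ (difference at the numerical solution handled by the Lipschitz bound, plus linearization of $f(\phi^{n+1})-f(\phi(t^{n+1}))$ with remainder $\tfrac{L_2}{\varepsilon}\|e^{n+1}\|_{L^3}^3$). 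However, there are two genuine gaps. The more serious one is your treatment of the nonlinear remainder: you propose to ``insert the coarse estimate \eqref{eq:ES:coarse2} into a Gagliardo--Nirenberg interpolation,'' but \eqref{eq:ES:coarse2} carries the prefactor $\exp(C_2\varepsilon^{-3}T)$, and any bound on $\|\nabla e^{n+1}\|$ or $\|e^{n+1}\|_{-1}$ extracted from it inherits that exponential. Matching powers then forces $\tau\lesssim e^{-c\varepsilon^{-3}T}$, not the polynomial restriction \eqref{cet18-00}, so the bootstrap as you describe it cannot close. The paper avoids this by an induction on the \emph{fine} estimate itself: assuming \eqref{eq:ES:fine0} holds through step $N$, a \emph{single} application of the one-step recursion \eqref{eq:ES:coarse1} (whose per-step growth factor $1+C_2\tau\varepsilon^{-3}$ is harmless for one step) bounds $\|e^{N+1}\|_{-1}^2+\tau\varepsilon\|\nabla e^{N+1}\|^2$ with only a polynomial prefactor, which is then enough to verify $G^{N+1}\le\varepsilon^4/8$ and advance the induction.

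The second gap is in your use of the spectral estimate. You invoke
\begin{equation*}
\varepsilon\|\nabla e^{n+1}\|^2-\tfrac{1}{\varepsilon}\bigl(f'(\phi(t^{n+1}))e^{n+1},e^{n+1}\bigr)\ge -C_0\|e^{n+1}\|_{-1}^2
\end{equation*}
with the \emph{entire} coefficient $\varepsilon$, but then also claim to bound the cross terms and the remainder ``against a small fraction of $\varepsilon\|\nabla e^{n+1}\|^2$'' and to retain $\tau\tfrac{\varepsilon^4}{2}\|\nabla e^{n+1}\|^2$ on the left of \eqref{eq:ES:fine0}. These are mutually inconsistent: if all of $\varepsilon\|\nabla e^{n+1}\|^2$ is consumed by the spectral inequality, nothing remains to absorb the $\|\nabla e^{n+1}\|^2$ contributions from $J_1,\dots,J_4$, the cross terms, and $G^{n+1}\|\nabla e^{n+1}\|^2$. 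The fix is the one the paper makes explicit: apply \eqref{eq:AP:eigen} only to a $(1-\eta)$ fraction of the quadratic form (the paper takes $\eta=\varepsilon^3$, $\eta_2=\varepsilon$, $\eta_1=\varepsilon^4/3$), bound the remaining $\eta$ portion crudely by $L^2\|e^{n+1}\|_{-1}^2+\tfrac{\eta\eta_2}{4}\|\nabla e^{n+1}\|^2$, and thereby leave exactly $\tfrac{\varepsilon^4}{4}\|\nabla e^{n+1}\|^2$ of dissipation available. Both gaps are repairable without changing your decomposition, but as written the argument does not yield the theorem under the stated step-size restriction.
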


\begin{proof}
  We refine the result of Theorem \ref{prio} by
  re-estimating $J_4$ in equation \eqref{eq:eoee} as
	\begin{equation}\label{eq:J4_2}
	\begin{split}
	J_{4}&=-B(\delta_{tt}e^{n+1},e^{n+1})
	 \leq 
	\frac{B^2}{\eta_1}\|\delta_{tt}e^{n+1}\|^2_{-1} 
	+\frac{\eta_1}{4}\|\nabla e^{n+1}\|^2,
	\end{split}
	\end{equation}
	and rewriting $J_5$ as
	\begin{align}\label{eq:J5_2}
	J_5 &=  J_6 + J_7,\\
	\begin{split}\label{eq:J6}
	J_{6}&=-\frac{1}{\varepsilon}\left(f(2\phi^{n}-\phi^{n-1})-f(\phi^{n+1}),e^{n+1}\right)\\
	&\le \frac{L}{\varepsilon}(|\delta_{tt}e^{n+1}|+|R_{2}^{n+1}|, |e^{n+1}|)\\
	&\le \frac{L^2}{\varepsilon^2\eta_1}\left(\|\delta_{tt}e^{n+1}\|_{-1}^2
	+\|R_{2}^{n+1}\|_{-1}^2\right) + \frac{\eta_1}{2} \|\nabla e^{n+1}\|^2,
	\end{split}\\
	\begin{split}\label{eq:J7}
	J_{7}&=
	-\frac{1}{\varepsilon}\left(f(\phi^{n+1})-f(\phi(t^{n+1})),e^{n+1}\right)\\
	&\le -\frac{1}{\varepsilon}\left(f'(\phi(t^{n+1}))e^{n+1},e^{n+1}\right)
	+\frac{L_2}{\varepsilon}\|e^{n+1}\|_{L^3}^3.
	\end{split}
	\end{align}
The spectrum estimate \eqref{eq:AP:eigen} give us
  \begin{equation}\label{spectrum}
    \varepsilon \|\nabla e^{n+1}\|^{2}
    +\frac{1}{\varepsilon}(f'(\phi(t^{n+1}))e^{n+1},  e^{n+1})
    \geq -C_{0}\|e^{n+1}\|_{-1}^{2}.
  \end{equation}
  If the entire $\varepsilon \|\nabla e^{n+1}\|^{2}$
  term is used to control the the term involving $f'$ in $J_7$, we
  will not be able to control the $\|\nabla e^{n+1}\|$ terms
  in $J_1,\ldots, J_4$ and $J_6$.  So we apply
  (\ref{spectrum}) with a scaling factor $(1-\eta)$ close to
  but smaller than $1$, to get
  \begin{equation}\label{spectrum1}
    -(1-{\eta})\frac{1}{\varepsilon}(f'(\phi(t^{n+1}))e^{n+1},  e^{n+1})
    \leq (1-{\eta})C_{0} \|e^{n+1}\|_{-1}^{2}
    +(1-{\eta})\varepsilon\|\nabla e^{n+1}\|^{2}.
  \end{equation}
  On the other hand,
  \begin{equation}\label{spectrum2}
    \begin{split}
      -\frac{\eta }{\varepsilon}(f'(\phi(t^{n+1}))e^{n+1},
      e^{n+1})
      \leq &
      \frac{L^{2}}{\varepsilon^{2}}\frac{\eta }{\eta_2}\|e^{n+1}\|_{-1}^{2}
      +\frac{\eta \eta_2 }{4}\|\nabla e^{n+1}\|^{2}.
    \end{split}
  \end{equation}
Combining (\ref{eq:J7}), (\ref{spectrum1}) and (\ref{spectrum2}) together, we have
  \begin{equation}\label{eq:J7_2}
    \begin{split}
      J_{7} \leq
      \left(C_{0}(1-\eta )+\frac{L^{2}}{\varepsilon^{2}}\frac{\eta }{\eta_2} \right) \|e^{n+1}\|_{-1}^{2}
       +\frac{L_2}{\varepsilon}\|e^{n+1}\|^{3}_{L^{3}}
      +\left((1-\eta )\varepsilon+\frac{\eta \eta_2 }{4}\right)\|\nabla
      e^{n+1}\|^{2}.
    \end{split}
  \end{equation}
  Substituting the estimate of \eqref{l1}-\eqref{eq:J3},
  \eqref{eq:J4_2}-\eqref{eq:J6} and
  \eqref{eq:J7_2} into (\ref{eq:eoee}), we get
 	\begin{equation}\label{eq:withL3}
 	\begin{split}
 	&\frac{1}{4\tau}(\|e^{n+1}\|_{-1}^{2}+\|2e^{n+1}-e^{n}\|_{-1}^{2})
 	+\frac{1}{2}A\tau\|\nabla e^{n+1}\|^{2}\\
 	&+\frac{1}{2}A\tau\|\delta_{t}\nabla e^{n+1}\|^{2}
 	+\varepsilon\|\nabla e^{n+1}\|^{2}
 	+\frac{1}{4\tau}\|\delta_{tt}e^{n+1}\|_{-1}^{2}
 	\\
 	\leq{}& 
 	\frac{1}{4\tau}(\|e^{n}\|_{-1}^{2}+\|2e^{n}-e^{n-1}\|_{-1}^{2})
 	+\frac{1}{2}A\tau\|\nabla e^{n}\|^{2}\\
 	&+\frac{1}{\eta_1}\|\Delta^{-1}R_{1}^{n+1}\|_{-1}^{2}
 	+\left(B^2+\frac{L^{2}}{\varepsilon^{2}}\right)\frac{1}{\eta_1}\|R_{2}^{n+1}\|_{-1}^{2}
 	+\frac{A^{2}}{\eta_1}\|\nabla
 	R_{3}^{n+1}\|^{2}\\
 	&+\left(B^2+\frac{L^{2}}{\varepsilon^{2}}\right)\frac{1}{\eta_1} \|\delta_{tt}e^{n+1}\|^2_{-1} 
 	+ \frac{3}{2}\eta_1\|\nabla e^{n+1}\|^{2}\\
 	&+\left(C_{0}(1-\eta )+\frac{L^{2}}{\varepsilon^{2}}\frac{\eta }{\eta_2} \right) \|e^{n+1}\|_{-1}^{2}
 	+\frac{L_2}{\varepsilon}\|e^{n+1}\|^{3}_{L^{3}}
 	+\left((1-\eta )\varepsilon+\frac{\eta \eta_2 }{4}\right)\|\nabla
 	e^{n+1}\|^{2}.
    \end{split}
 	\end{equation}

    We now estimate the $L^3$ term.  By interpolating $L^3$
    between $L^2$ and $H^1$ and using Poincare's inequality
    for error function, we get
\begin{equation*}
	\|e^{n+1}\|^3_{L^3} \le K \| \nabla e^{n+1} \|^{\frac{d}{2}} \| e^{n+1} \|^{\frac{6-d}{2}},
\end{equation*}
where $K$ is a constant independent of $\varepsilon$ and
$\tau$. We continue the estimate by using
$\|e^{n+1}\|^2 \le \|\nabla e^{n+1}\|\cdot \|e^{n+1}\|_{-1}$
to get
\begin{equation}\label{eq:L3}
\frac{L_2}{\varepsilon}\|e^{n+1}\|^3_{L^3} \le {L_2}K\varepsilon^{-1} \| \nabla e^{n+1} \|^{\frac{d}{2}+\frac{6-d}{4}} \| e^{n+1} \|_{-1}^{\frac{6-d}{4}} = G^{n+1} \|\nabla e^{n+1} \|^2,
\end{equation}
where $G^{n+1}=L_2K\varepsilon^{-1} \| \nabla e^{n+1} \|^{\frac{d-2}{4}} \| e^{n+1} \|_{-1}^{\frac{6-d}{4}}$.

Now plug equation \eqref{eq:L3} into \eqref{eq:withL3}, and
take $\eta_2=\varepsilon$, $\eta =\varepsilon^3$,
$\eta_1={\varepsilon^4}/{3}$ and
$\tau \le {\varepsilon^6}/12(B^2\varepsilon^2+L^2)$,
such that
  \begin{equation*}
  \frac{3\eta_1}{2}+(1-\eta )\varepsilon+\frac{\eta \eta_2 }{4}
  =\varepsilon-\frac{1}{4}\varepsilon^4,\quad
  \left(B^2+\frac{L^2}{\varepsilon^2}\right)\frac{1}{\eta_1}\le \frac{1}{4\tau},
  \quad \frac{L^2}{\varepsilon^2}\frac{\eta }{\eta_2} = L^2,
  \end{equation*}
  we get
 	\begin{equation}\label{eq:withL3subst}
 	\begin{split}
 	&\frac{1}{4\tau}(\|e^{n+1}\|_{-1}^{2}+\|2e^{n+1}-e^{n}\|_{-1}^{2})
 	+\frac{1}{2}A\tau\|\nabla e^{n+1}\|^{2}\\
 	&+\frac{1}{2}A\tau\|\delta_{t}\nabla e^{n+1}\|^{2}
 	+\frac{\varepsilon^4}{4}\|\nabla e^{n+1}\|^{2}
 	\\
 	\leq{}& 
 	\frac{1}{4\tau}(\|e^{n}\|_{-1}^{2}+\|2e^{n}-e^{n-1}\|_{-1}^{2})
 	+\frac{1}{2}A\tau\|\nabla e^{n}\|^{2}\\
 	&+\frac{3}{\varepsilon^4}\|\Delta^{-1}R_{1}^{n+1}\|_{-1}^{2}
 	+\left(B^2+\frac{L^{2}}{\varepsilon^{2}}\right)\frac{3}{\varepsilon^4}\|R_{2}^{n+1}\|_{-1}^{2}
 	+\frac{3A^{2}}{\varepsilon^4}\|\nabla
 	R_{3}^{n+1}\|^{2}
 	\\
 	&+\left(C_{0}(1-\eta )+{L^{2}} \right) \|e^{n+1}\|_{-1}^{2}
 	+G^{n+1}\| \nabla e^{n+1} \|^{2}.
 	\end{split}
 	\end{equation}
    If $G^{n+1}$ is uniformly bounded by constant
    $\varepsilon^4/8$, then we can get a finer error
    estimate for $\tau < {1}/{8(C_0+L^2)}$ by using discrete Gronwall inequality and the assumption of first step error
    \eqref{eq:AP:phi1e}:
 \begin{equation}\label{eq:ES:fine}
 \begin{split}
 &\max_{1\le n\le N}\left\{\|e^{n+1}\|_{-1}^{2} +\|2e^{n+1}-e^{n}\|_{-1}^{2}
 +2A\tau^{2}\|\nabla e^{n+1}\|^{2}\right\}\\
 &\qquad+\sum_{n=1}^{N}\left(2A\tau^2\|\delta_{t}\nabla e^{n+1}\|^{2}
 +\tau\varepsilon^4/2\|\nabla e^{n+1}\|^{2}\right)\\
 \leq{} &
 \frac{3}{2}C'_{3}\exp(8(C_0+L^2)T)
 \varepsilon^{-\max\{\rho_6 +4,\rho_4+6,\rho_2+9\}}\tau^{4}, \quad N\ge 1,
 \end{split}
 \end{equation}
 where $C'_3 = C_3+ C_1(5 + 2A\varepsilon^{-1}\tau)$.
 We prove this by induction. Assuming that the above
 estimate holds for all first $N-1$ time steps. Then the
 coarse estimate \eqref{eq:ES:coarse1} leads to
 	\begin{equation}\label{eq:ES:En1}
 	\begin{split}
 	\|e^{N+1}\|_{-1}^{2} + \tau \varepsilon\|\nabla e^{N+1} \|^2
 	\leq{} & 
 	\|e^{N}\|_{-1}^{2}
 	+(1+C_{2}\tau\varepsilon^{-3})
 	\|2e^{N}-e^{N-1}\|_{-1}^{2}
 	\\
 	&+2A\tau^{2}\|\nabla e^{N}\|^{2}
 	+C_{3}\tau^{4}\varepsilon^{-\max\{\rho_6 +1,\rho_4+3,\rho_2+6 \}}.\\ 	
 	\end{split}
 	\end{equation}
    Then by induction assumption \eqref{eq:ES:fine}, we get
\begin{equation}\label{eq:ES:En2}
\begin{split}
\|e^{N+1}\|_{-1}^{2} + \tau \varepsilon\|\nabla e^{N+1} \|^2
\leq{} & 
C_4\varepsilon^{-\max\{\rho_6 +4,\rho_4+6,\rho_2+9 \}}\tau^{4} 
\end{split}
\end{equation}
where
$C_4 = \frac{3}{2}C'_3(1+C_2\tau\varepsilon^{-3}) \exp(8(C_0+L^2)T) + C_3
\varepsilon^3$.  Thus, if
 	\begin{equation}\label{eq:GnE}
 	 L_2K \varepsilon^{-1}  [C_4\varepsilon^{-\max\{\rho_6 +4,\rho_4+6,\rho_2+9 \}-1} \tau^3 ]^{\frac{d-2}{8}} 
 	 [C_4\varepsilon^{-\max\{\rho_6 +4,\rho_4+6,\rho_2+9 \}}\tau^4]^{\frac{6-d}{8}}\le \varepsilon^4/8,
 	\end{equation}
 we get $G^{N+1} \le \varepsilon^4/8$. Solving \eqref{eq:GnE}, we get
 \begin{equation}\label{eq:tGnE}
 \tau
 \le  C_5 \varepsilon^{(5+\frac{1}{2}\max\{\rho_6 +4,\rho_4+6,\rho_2+9 \}+\frac{d-2}{8})\frac{8}{18-d}},
 \end{equation} 
 where $C_5=\left( 8L_2KC_4^\frac12\right)^{-\frac{8}{18-d}}$.	 	
 The proof is completed. \qed
\end{proof}

\begin{remark}\label{rmk:errest}
  Different to the related work \cite{feng_error_2004}, we
  did not take $\eta=\tau^\beta$ in equation
  \eqref{eq:withL3}, this allows us to maintain the full
  order about $\tau$. Besides, our induction method to
  handle $L^3$ term is much simpler than the method used in
  \cite{feng_error_2004} and \cite{kessler_posteriori_2004},
  where first-order schemes are studied.
\end{remark}

\begin{remark}\label{rmk:errest2}
  Theorem \ref{thm:errorHn1} and its proof is valid for the
  special cases $A=0$ and/or $B=0$,
  since the condition \eqref{eq:BDF:ABcond} is not used in
  the proof.
\end{remark}

\section{Implementation and numerical results}

In this section, we numerically verify our
schemes are second order accurate in time and energy
stable.

We use the commonly used  double-well potential 
$F(\phi)=\frac{1}{4}(\phi^{2}-1)^{2}$.
It is a common practice to modify $F(\phi)$ to have a
quadratic growth for $|\phi|>1$ (since physically
$|\phi|\leq 1$), such that a global Lipschitz condition is
satisfied \cite{shen_numerical_2010},
\cite{condette_spectral_2011}. To get a $C^4$ smooth
double-well potential with quadratic growth, we introduce
$\tilde{F}(\phi) \in C^{\infty}(\mathbf{R})$ as a smooth
mollification of
\begin{equation}\label{efnew}
\hat{F}(\phi)=
\begin{cases}
\frac{11}{2}(\phi-2)^{2}+6(\phi-2)+\frac94, &\phi>2, \\
\frac{1}{4 }(\phi^{2}-1)^{2}, &\phi\in [-2,2], \\
\frac{11}{2}(\phi+2)^{2}+6(\phi+2)+\frac94, &\phi<-2.
\end{cases}
\end{equation}
with a mollification parameter much smaller than 1, to
replace $F(\phi)$. Note that the truncation points $-2$ and
$2$ used here are for convenience only. Other values outside
of region $[-1,1]$ can be used as well.  For simplicity, we
still denote the modified function $\tilde{F}$ by $F$.

\subsection{Space discrete and implementation}

To test the numerical scheme, we solve \eqref{eq:CH} in
a 2-dimensional tensor product domain
$\Omega=[-1,1]\times[-1,1]$. We use a Legendre Galerkin
method similar as in
\cite{shen_efficient_2015,yu_numerical_2017} for spatial
discretization.  Let $L_k(x)$ denote the Legendre polynomial
of degree $k$. We define
\[
V_M = \mbox{span}\{\,\varphi_k(x)\varphi_j(y),\ k,j=0,\ldots, M-1\,\}\in H^1(\Omega),
\]
where
$\varphi_0(x) = L_0(x); \varphi_1(x)=L_1(x);  \varphi_k(x)=L_k(x)-L_{k+2}(x), k=2,\ldots, M\!-\!1
$,
 be the Galerkin approximation space for both $\phi^{n+1}$ and $\mu^{n+1}$. Then the full discretized
form for the SL-BDF2 scheme reads:

Find $(\phi^{n+1}, \mu^{n+1}) \in (V_M)^{2}$ such that
\begin{equation}\label{eq:bdf2:fulldis1}
\frac{1}{2\tau}(3\phi^{n+1}-4\phi^{n}+\phi^{n-1}, \omega)=-\gamma(\nabla \mu^{n+1},\nabla \omega), 
\quad \forall\, \omega \in V_M,
\end{equation}
\begin{equation}\label{eq:bdf2:fulldis2}
\begin{split}
(\mu^{n+1},\varphi)={} &\varepsilon (\nabla \phi^{n+1},\nabla \varphi) +
\frac{1}{\varepsilon}(f(2\phi^{n}-\phi^{n-1}),\varphi)
+A\tau (\nabla \delta_{t}\phi^{n+1},\nabla \varphi)\\
&\qquad\qquad\qquad\qquad\qquad
+B(\delta_{tt}\phi^{n+1},\varphi), \quad \forall\, \varphi \in
V_M.
\end{split}
\end{equation}
This is a linear system with constant coefficients for
$(\phi^{n+1}, \mu^{n+1})$, which can be efficiently solved.
We use a spectral transform with double quadrature points to
eliminate the aliasing error and efficiently evaluate the
integration $(f(2\phi^n-\phi^{n-1}),\varphi)$ in equation
\eqref{eq:bdf2:fulldis2}.

We take $\varepsilon=0.05$ and $M=127$ and use two different
initial values to test the stability and accuracy of the
proposed schemes:
\begin{enumerate}
\item[(i)] $\{\phi_0(x_i,y_j) \}\in\, {\bf{R}}^{2M\times2M}$
  with $x_i, y_j$ are tensor product Legendre-Gauss
  quadrature points and $\phi_0(x_i,y_j)$ is a uniformly
  distributed random number between $-1$ and $1$ (shown in
  the left picture of Fig. \ref{fig1});
\item[(ii)] The solution of the Cahn-Hilliard equation at
  $t=64\varepsilon^3$ which takes $\phi_0$ as its initial
  value (Denoted by $\phi_1$ shown in the middle picture of
  Fig. \ref{fig1}).
\end{enumerate}
\begin{figure}
	\centering
	\includegraphics[width=\textwidth]{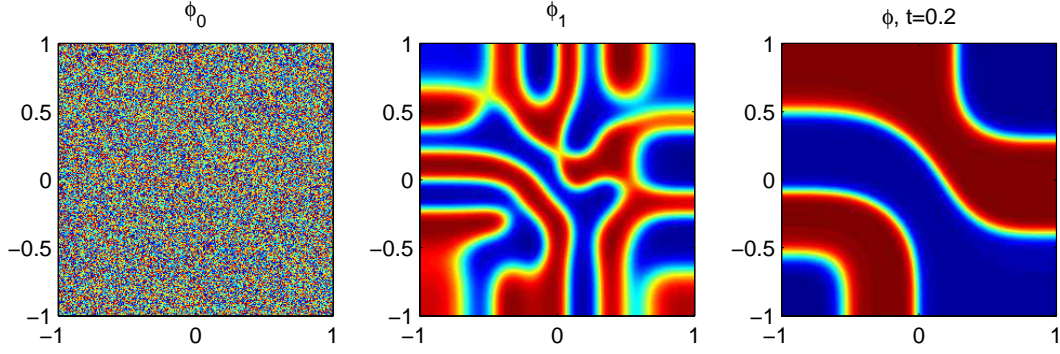}
	\caption{The two random initial values $\phi_0$,
      $\phi_1$ and the state of $\phi_1$ evolves $0.2$ time
      unit according to the Cahn-Hilliard equation
      \eqref{eq:CH} with $\gamma=1$.}
	\label{fig1}
\end{figure}

Given $\phi_0$, to start the second order schemes, we use following first order stabilized scheme to generate $\phi^1$
\begin{equation}
\frac{\varphi^{n+1}-\varphi^n}{s}=\Delta \omega^{n+1}, \quad 
\omega^{n+1}=-\varepsilon\Delta\varphi^{n+1}+\frac{1}{\varepsilon}f(\varphi^n)+S\delta_t\varphi^{n+1},\quad n=0,\ldots, m-1,
\end{equation}
where $S = 1/\varepsilon$ is a stabilization constant,
$s$ is the time step-size, $\varphi^0=\phi_0$. To get an accurate $\phi^1$,
we let $s=\tau/m$ with $m =10$, and let $\phi^1=\varphi^m$. 

\subsection{Stability results}

Table \ref{tstab0:B},\ref{tstab0:A} show the required
minimum values of $A$ (resp. $B$) with different $\gamma$,
$B$ (resp. $A$) and $\tau$ values for stably solving (the increase of discrete energy in each time-step is less than $10^{-10}$ for 1024 time steps) the Cahn-Hilliard equation
\eqref{eq:CH} with initial value $\phi_0$.  The results for
the initial value $\phi_1$ are similar. From the two tables,
we observe that: 
\begin{enumerate}
	\item For smaller $\tau$ values, the SL-BDF2
	scheme need smaller $A,B$ to be stable comparing to the SL-CN scheme, while both of
	them are stable with $A=0, B=0$ when $\tau$ is small enough.
	On the other hand, for larger $\tau$, SL-BDF2 schemes need
	relatively larger $A$ and $B$ than SL-CN scheme. This might
	due to the fact that the SL-BDF2 scheme has larger diffusion
	and splitting error than SL-CN scheme.
	\item The existence of a non-zero $B$ 
	 remarkably reduce the values of $A$ needed for the scheme to be energy stable, especially when $\gamma=1$ and $10^{-5}\le\tau\le 0.1$. On the other hand, a non-zero $A$ 
	remarkably reduce the values of $B$ needed for the scheme to be energy stable when $\tau$ is large.
\end{enumerate}

\begin{table}
	\begin{tabular}{|c|c|c|c|c|c|c|c|c|}
		\hline
		\multirow{3}{*}{$\tau$ }
		&  \multicolumn{4}{|c|}{SL-BDF2}
		&  \multicolumn{4}{|c|}{SL-CN}
		\\ \cline{2-9}&  \multicolumn{2}{|c|}{$\gamma=0.0025$ }
		&  \multicolumn{2}{|c|}{$\gamma=1$ }
		&  \multicolumn{2}{|c|}{$\gamma=0.0025$ }
		&  \multicolumn{2}{|c|}{$\gamma=1$ }
		\\ \cline{2-9} & $B=0$ & $B=10$ & $B=0$ & $B=10$ & $B=0$ & $B=10$ & $B=0$ & $B=10$\\ \hline
		10  & 1 & 0.5 & 12.5 & 12.5 & 1 & 0.25 & 12.5 & 12.5\\ \hline
		1  & 2 & 0.5 & 25 & 25 & 1 & 0.5 & 25 & 12.5\\ \hline
		0.1  & 1 & 0.25 & 200 & 100 & 1 & 0.25 & 200 & 50\\ \hline
		0.01  & 0 & 0 & 400 & 200 & 1 & 0.25 & 400 & 100\\ \hline
		0.001  & 0 & 0 & 800 & 200 & 0 & 0 & 400 & 100\\ \hline
		0.0001  & 0 & 0 & 200 & 0 & 0 & 0 & 400 & 100\\ \hline
		1E-05  & 0 & 0 & 0 & 0 & 0 & 0 & 400 & 50\\ \hline
		1E-06  & 0 & 0 & 0 & 0 & 0 & 0 & 0 & 0\\ \hline
	\end{tabular} 
	\centering 
	\caption{The minimum values of $A$ (only values $\{0, 2^i,i=-7,\ldots,1\}\times4\gamma/\varepsilon^2$ are tested) 
		to make SL-BDF2 and SL-CN scheme stable when $\gamma$, $B$ and $\tau$ taking different values.}
	\label{tstab0:B}
\end{table}

\begin{table}
	\begin{tabular}{|c|c|c|c|c|c|c|c|c|}
		\hline
		\multirow{3}{*}{$\tau$ }
		&  \multicolumn{4}{|c|}{SL-BDF2}
		&  \multicolumn{4}{|c|}{SL-CN}
		\\ \cline{2-9}&  \multicolumn{2}{|c|}{$\gamma=0.0025$ }
		&  \multicolumn{2}{|c|}{$\gamma=1$ }
		&  \multicolumn{2}{|c|}{$\gamma=0.0025$ }
		&  \multicolumn{2}{|c|}{$\gamma=1$ }
		\\ \cline{2-9} & $A=0$ & $A=0.0625$ & $A=0$ & $A=25$ & $A=0$ & $A=0.0625$ & $A=0$ & $A=25$\\ \hline
		10  & 320 & 40 & $>640$ & 0 & 320 & 20 & $>640$ & 0\\ \hline
		1  & 40 & 40 & $>640$ & 0 & 80 & 20 & $>640$ & 0\\ \hline
		0.1  & 20 & 20 & $>640$ & 40 & 20 & 20 & 640 & 20\\ \hline
		0.01  & 0 & 0 & 40 & 40 & 20 & 20 & 320 & 20\\ \hline
		0.001  & 0 & 0 & 40 & 40 & 0 & 0 & 40 & 20\\ \hline
		0.0001  & 0 & 0 & 10 & 10 & 0 & 0 & 20 & 20\\ \hline
		1E-05  & 0 & 0 & 0 & 0 & 0 & 0 & 20 & 20\\ \hline
		1E-06  & 0 & 0 & 0 & 0 & 0 & 0 & 0 & 0\\ \hline
	\end{tabular} 
	\centering 
	\caption{The minimum values of $B$ (only values $\{0, 2^i,i=-3,\ldots,4\}\times 2/\varepsilon$ are tested) to make scheme SL-BDF2 and SL-CN stable when $\gamma$, $A$ and $\tau$ taking different values.}
	\label{tstab0:A}
\end{table}

Figure \ref{fig:2Energy} presents the discrete energy
dissipation of the SL-CN and SL-BDF2 scheme using several
time step-sizes. We see the energy decaying property is
maintained.

\begin{figure}
	\centering 
	\includegraphics[width=0.48\textwidth]{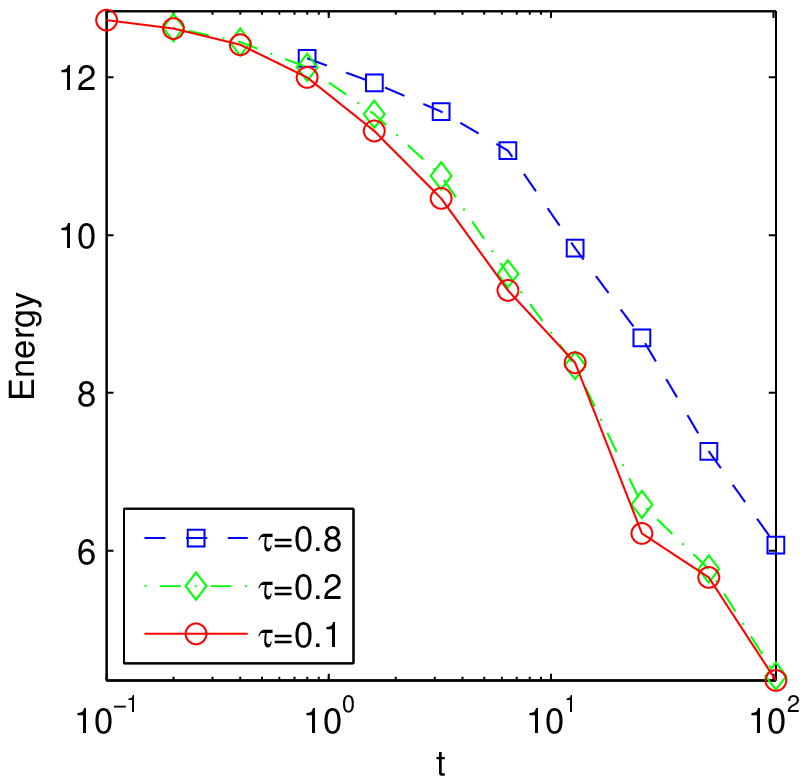}
	\includegraphics[width=0.48\textwidth]{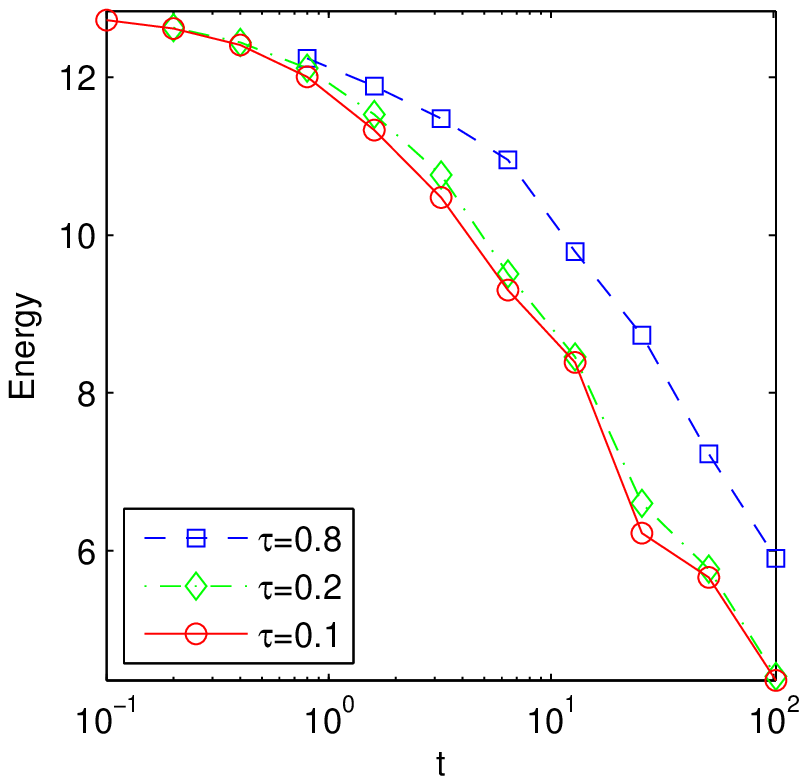}
	\caption{The discrete energy dissipation of the two
      schemes solving the Cahn-Hilliard equation with
      initial value $\phi_1$, and relaxation parameter
      $\gamma=0.0025$. Stability constant $A=0.25, B=20$ are
      used.  Left) result of SL-CN scheme; Right) result of
      SL-BDF2 scheme.}
	\label{fig:2Energy}
\end{figure}

\subsection{Accuracy results}

We take initial value $\phi_1$ to test the accuracy of the
two schemes.  The Cahn-Hilliard equation with
$\gamma=0.0025$ are solved from $t=0$ to $T=12.8$. To
calculate the numerical error, we use the numerical result
generated using $\tau=10^{-3}$ as a reference of exact
solution.  The results are given in Table
\ref{tbl:conv:SL-BDF2} and Table \ref{tbl:conv:SL-CN}. We
see that the schemes are both second order accuracy in
$H^{-1}, L^2$ and $H^1$ norm.  
\begin{table}
	\begin{tabular}{|c|c|c|c|c|c|c|}
		\hline
		$\tau$ & $H^{-1}$ Error & Order & $L^2$ Error & Order & $H^1$ Error & Order\\ \hline
		0.16 & 3.32E-02 &  & 2.63E-01 &  & 3.49E+00 &  \\ \hline
		0.08 & 9.71E-03 & 1.77 & 8.02E-02 & 1.72 & 1.10E+00 & 1.67 \\ \hline
		0.04 & 2.54E-03 & 1.94 & 2.10E-02 & 1.93 & 2.89E-01 & 1.93 \\ \hline
		0.02 & 6.38E-04 & 1.99 & 5.25E-03 & 2.00 & 7.21E-02 & 2.00 \\ \hline
		0.01 & 1.58E-04 & 2.02 & 1.30E-03 & 2.02 & 1.78E-02 & 2.02 \\ \hline
		0.005 & 3.80E-05 & 2.05 & 3.14E-04 & 2.05 & 4.30E-03 & 2.05 \\ \hline
	\end{tabular} 
	\centering 
	\caption{The convergence of the SL-BDF2 scheme
		with $B=40$, $A=0.25$
		for the Cahn-Hilliard equation with initial value $\phi_1$, parameter $\gamma=0.0025$.
		The errors are calculated at $T=12.8$ for both schemes.}
	\label{tbl:conv:SL-BDF2}
\end{table}

\begin{table}
	\begin{tabular}{|c|c|c|c|c|c|c|}
		\hline
		$\tau$ & $H^{-1}$ Error & Order & $L^2$ Error & Order & $H^1$ Error & Order\\ \hline
		0.16 & 3.26E-02 &  & 2.58E-01 &  & 3.42E+00 &  \\ \hline
		0.08 & 9.32E-03 & 1.81 & 7.64E-02 & 1.76 & 1.04E+00 & 1.71 \\ \hline
		0.04 & 2.41E-03 & 1.95 & 1.98E-02 & 1.95 & 2.71E-01 & 1.95 \\ \hline
		0.02 & 6.06E-04 & 1.99 & 4.96E-03 & 2.00 & 6.77E-02 & 2.00 \\ \hline
		0.01 & 1.50E-04 & 2.01 & 1.23E-03 & 2.01 & 1.68E-02 & 2.01 \\ \hline
		0.005 & 3.64E-05 & 2.05 & 2.97E-04 & 2.05 & 4.06E-03 & 2.05 \\ \hline
	\end{tabular} 
	\centering 
	\caption{The convergence of the SL-CN scheme
		with $B=20$, $A=0.25$
		for the Cahn-Hilliard equation with initial value $\phi_1$, parameter $\gamma=0.0025$.
		The errors are calculated at $T=12.8$ for both schemes.}
	\label{tbl:conv:SL-CN}
\end{table}

\section{Conclusions}

We propose two second order stabilized linear schemes
(SL-BDF2 scheme and SL-CN scheme) for the phase-field
Cahn-Hilliard equation. In both schemes, the nonlinear bulk
forces are treated explicitly with two additional linear
stabilization terms: $-A\tau \Delta\delta_t\phi^{n+1}$ and $B\delta_{tt}\phi^{n+1}$. In particular, the introduction of a
$H^1$ stabilization term $A\tau \Delta\delta_t\phi^{n+1}$
enables us to prove the unconditionally stability results. We
also give a rigorous optimal error analysis of the SL-BDF2
scheme. This error analysis holds for the special case $A=0$
and/or $B=0$ as well.  Numerical results are presented to
verify the stability and accuracy of the proposed schemes.
By combining nonzero values of $B$ and $A$, the two schemes
can obtain better stability results than the cases use only one stabilization term.

\section*{Acknowledgment}
The authors would like to thank Prof. Jie Shen and Prof. Xiaobing Feng for helpful discussions.
This work is partially supported by NNSFC
under Grant 11771439, 11371358, 91530322.

\appendix
\renewcommand{\theequation}{A.\arabic{equation}}

\section*{Appendix: Proof of Lemma \ref{lm:reg}}
\begin{proof}
	We first write down some inequalities that will be frequently used. The first one is the Holder's inequality
	\begin{equation}\label{eq:Holder3}
	\| u v w\|_{L^s} \le \|u\|_{L^p} \|v\|_{L^q} \| w\|_{L^r}, \quad\forall\ p,q,r\in(0,\infty],\ \frac{1}{s} = \frac{1}{p} + \frac{1}{q} + \frac{1}{r}.
	\end{equation}The second one is the Sobolev inequality
	\begin{equation}\label{eq:Sobolev}
	\| u \|_{L^q} \le C_s \| u \|_1, 
	\end{equation}
	where $q \in [2, \infty)$ for $d=2$;  $q\in [2, \frac{2d}{d-2}]$ for $d>2$; $C_s$ is a general constant independent of $\phi$. 
	We can further use Poincare's inequality to get
	\begin{equation} \label{eq:Sobolev2}
	\| v \|_{L^q} \le C_s \| \nabla v \|,\quad \forall v\in L^2_0(\Omega).
	\end{equation}
	For $v \in L^2_0(\Omega)$,  we also have following inequality
	\begin{equation}\label{eq:L02}
	\| v\|^2 = (\nabla v,\nabla (-\Delta)^{-1} v) \le \frac{1}{2\delta} \|\nabla v\|^2 + \frac{\delta}{2} \| v \|_{-1}^2,
	\end{equation} where $\delta>0$ is an arbitrary constant.
	
	Now, we begin the proof.
	\begin{enumerate}
		\item[(i)]
		When $\gamma=1$, we have Cahn-Hilliard equation
		\begin{equation}\label{eq:CH0}
		\phi_{t}+ \varepsilon \Delta^2 \phi =\dfrac{1}{\varepsilon} \Delta f(\phi).
		\end{equation}
		Multiplying \eqref{eq:CH0} by $-\Delta^{-1} \phi_t$ and using integration by parts, we get
		\begin{equation}\label{eq:CH1}
		\|\phi_t\|_{-1}^2 + \frac{\varepsilon}{2}\frac{d}{ d t} \|\nabla \phi\|^2
		=-\frac{1}{\varepsilon}(f(\phi),\phi_t)
		=-\frac{1}{\varepsilon}\frac{d}{d t}\int_{\Omega} F(\phi) dx.
		\end{equation}
		After integrating over $[0, T]$,  we obtain
		\begin{equation}
		\int_{0}^{T}\|\phi_t\|_{-1}^{2}{\rm d}t + 
		E_{\varepsilon}(\phi(T)) = E_{\varepsilon}(\phi^0)
		\end{equation} 
		Taking maximum values of terms on the left hand side for $T \in [0, \infty]$, we get the first part of (i) from \eqref{eq:AP:E0}.
		From the definition of $E_\varepsilon(\phi)$, and assumption \eqref{eq:AP:Fcoercive} we know
		\begin{equation}
		\|\phi\|_{L^2}^2 \le B_0 |\Omega| + B_1 \varepsilon^{-\sigma_1+1} \lesssim \varepsilon^{-(\sigma_1-1)^+}.
		\end{equation}
		Combining above estimate with the fact $\frac\varepsilon2\| \nabla \phi \| ^2 \lesssim \varepsilon^{-\sigma_1}$, we get 
		\begin{equation} \label{eq:CHregH1}
		\| \phi \|_1^2 \lesssim \varepsilon^{-(\sigma_1+1)}.
		\end{equation}
		\item[(ii)]
		We formally differentiate \eqref{eq:CH0} in time to obtain
		\begin{equation}\label{eq:CH00}
		\phi_{tt}+ \varepsilon \Delta^2 \phi_{t} = \dfrac{1}{\varepsilon}\Delta \left( f'(\phi)\phi_t \right). \\
		\end{equation}
		
		Pairing \eqref{eq:CH00} with $ -\Delta^{-1}\phi_t$ and using \eqref{eq:L02}, yields
		\begin{equation}\label{eq:CH13}
		\begin{split}
		\frac{1}{2}\frac{d}{d t}\|\phi_{t}\|_{-1}^2
		+ \varepsilon \| \nabla \phi_{t}\|^2
		= &-\dfrac{1}{\varepsilon}\left( f'(\phi)\phi_t, \phi_t \right)
		\leq \frac{\tilde{c}_0}{\varepsilon}\|\phi_t\|^2\\
		\leq & \frac{\varepsilon}{2} \|\nabla \phi_{t}\|^2 + \frac{\tilde{c}_{0}^2}{2 \varepsilon^3}\|\phi_{t}\|_{-1}^2.\\
		\end{split}
		\end{equation}
		Integrating \eqref{eq:CH13} over $[0,T]$ and taking maximum values for terms depending on $T$, we get
		\begin{equation}\label{eq:CH14}
		\esssup \limits_{t\in[0,\infty]}  \|\phi_{t}\|_{-1}^2+ \varepsilon\int_{0}^{\infty} \|\nabla \phi_{t}\|^2{\rm d} t
		\lesssim \frac{\tilde{c}_{0}^2}{ \varepsilon^3} \int_{0}^{\infty } \|\phi_t\|_{-1}^2 {\rm d} t
		+\|\phi_{t}^0\|_{-1}^2. \\
		\end{equation}
		The assertion then follows from (i) and the inequality \eqref{eq:AP:htHn1} of Assumption \ref{ap:2}.
		
      \item[(iii)]
		Testing \eqref{eq:CH00} with $\phi_t$, using \eqref{eq:Holder3} and \eqref{eq:Sobolev} with Poincare's inequality, we get
		\begin{equation}\label{eq:CH21}
		\begin{split}
		\frac{1}{2} \frac{d}{d t}\| \phi_t\|^2 + \varepsilon \|\Delta \phi_t \|^2
		= & \frac{1}{\varepsilon}(f'(\phi) \phi_t, \Delta \phi_t)
		\leq \frac{1}{\varepsilon} \|f'(\phi)\|_{L^3} \| \phi_t\|_{L^6} \| \Delta \phi_t\|\\
		\leq &
		\frac{\varepsilon}{2} \|\Delta \phi_t\|^2
		+\frac{1}{2 \varepsilon^3} \|f'(\phi)\|_{L^3}^2 \| \phi_t \|_{L^6}^2\\
		\leq &
		\frac{\varepsilon}{2} \|\Delta \phi_t\|^2
		+\frac{C_s^2}{2 \varepsilon^3} \|f'(\phi)\|_{L^3}^2 \| \nabla \phi_t \|^2,
		\end{split}
		\end{equation}
		which leads to 
		\begin{equation}\label{eq:CH22}
		\esssup \limits_{t\in[0,\infty]}\|\phi_t\|^2 + \varepsilon \int_{0}^{\infty} \|\Delta \phi_t\|^2 {\rm d} t
		\lesssim
		\frac{C_s}{\varepsilon^3}  \esssup_{t\in[0,\infty]} \|f'(\phi)\|_{L^3}^2  
		\int_{0}^{\infty} \|\nabla \phi_t\|^2 {\rm d} t
		+\|\phi_t^0\|^2.
		\end{equation}
		On the other hand side, by assumption \eqref{eq:AP:fp}, the Sobolev inequality \eqref{eq:Sobolev} and estimate \eqref{eq:CHregH1}, we have
		\begin{equation}\label{eq:fpL3}
		\|f'(\phi)\|_{L^3}^2 \lesssim \tilde{c}_2 \|\phi\|_{L^{3(p-2)}}^{2(p-2)} +\tilde{c}_3  
		\lesssim \tilde{c}_2 \| \phi \|_1^{2(p-2)} + \tilde{c}_3
		\lesssim \varepsilon^{-(\sigma_1+1)(p-2)}
		\end{equation}
		The assertion then follows from \eqref{eq:CH22}, \eqref{eq:fpL3},  (ii) and assumption \eqref{eq:AP:htL2}.
		
		\item[(iv)]
		Testing \eqref{eq:CH00} with $-\Delta^{-1}\phi_{tt}$, we get
		\begin{equation}\label{eq:CH23}
		\begin{split}
		&\|\phi_{tt}\|_{-1}^2+ \frac{\varepsilon}{2}\frac{d}{dt} \|\nabla \phi_{t}\|^2
		=- \dfrac{1}{\varepsilon}(f'(\phi)\phi_t, \phi_{tt})\\
		= & -\frac{1}{2\varepsilon}\frac{d}{dt}(f'(\phi)\phi_t,\phi_t) 
		+ \frac{1}{2\varepsilon} (f''(\phi)\phi_t^2,\phi_t) \\
		\leq & -\frac{1}{2\varepsilon}\frac{d}{dt}(f'(\phi)\phi_t,\phi_t) 
		+ \frac{1}{2\varepsilon} \|f''\|_{L^{6}} \| \phi_t^2\|_{L^3} \|\phi_t\| \\
		\le & -\frac{1}{2\varepsilon}\frac{d}{dt}(f'(\phi)\phi_t,\phi_t) 
		+ \frac{C_s^2}{2\varepsilon} \|f''\|_{L^{6}} \| \nabla \phi_t\|^2 \| \phi_t\| \\
		\end{split}
		\end{equation}
		Integrate  \eqref{eq:CH23} over $[0,T]$, we continue the estimate as
		\begin{equation}\label{eq:CH24}
		\begin{split}
		&2\int_{0}^{T} \|\phi_{tt}\|_{-1}^2 {\rm  d} t
		+ \varepsilon\|\nabla \phi_{t}(T)\|^2
		- \varepsilon\|\nabla \phi_{t}^0\|^2\\
		\leq & -\frac{1}{\varepsilon}(f'(\phi)\phi_t,\phi_t)|_{t=T} + \frac{1}{\varepsilon} (f'(\phi^0)\phi_t^0,\phi_t^0)
		+ \frac{C_s^2}{\varepsilon} \esssup_{t\in[0,T]}\{\|f''\|_{L^{6}} \|\phi_t\|\}\int_0^T \|\nabla\phi_t\|^2 {\rm d}t\\
		\leq &  \frac{\varepsilon}{2} \|\nabla \phi_{t}(T)\|^2 + \frac{\tilde{c}_{0}^2}{2 \varepsilon^3}\|\phi_{t}(T)\|_{-1}^2
		+ \frac{1}{\varepsilon} (f'(\phi^0)\phi_t^0,\phi_t^0)
		+ \frac{C_s^2}{\varepsilon} \esssup_{t\in[0,T]}\{\|f''\|_{L^{6}} \|\phi_t\|\}\int_0^T \|\nabla\phi_t\|^2{\rm d}t, 
		\end{split}
		\end{equation}
		i.e. 
		\begin{equation}\label{eq:CH24-c}
		\begin{split}
		2\int_{0}^{T} \|\phi_{tt}\|_{-1}^2 {\rm  d} t
		&+ \frac{\varepsilon}{2}\|\nabla \phi_{t}(T)\|^2
		\leq 
		\varepsilon\|\nabla \phi_{t}^0\|^2 
		+ \frac{1}{\varepsilon} (f'(\phi^0)\phi_t^0,\phi_t^0)\\
		& + \frac{\tilde{c}_{0}^2}{2 \varepsilon^3}\|\phi_{t}(T)\|_{-1}^2
		+ \frac{C_s^2}{\varepsilon} \esssup_{t\in[0,T]} \{\|f''\|_{L^{6}} \|\phi_t\| \}\int_0^T \|\nabla\phi_t\|^2{\rm d}t.
		\end{split}
		\end{equation}
		On the other hand, by \eqref{eq:AP:fpp}, the Sobolev inequality \eqref{eq:Sobolev} and estimate \eqref{eq:CHregH1}, we have
		\begin{equation}\label{eq:fppL6}
		\|f''\|_{L^{6}} \lesssim \tilde{c}_4\| \phi \|_{L^{6(p-3)^+}}^{(p-3)^+} + \tilde{c}_5 
		\lesssim \|\phi\|_1^{(p-3)^+} 
		\lesssim \varepsilon^{-\frac12(\sigma_1+1)(p-3)^+}
		\end{equation}
		By taking maximum for terms depending on $T$ in \eqref{eq:CH24-c} and using \eqref{eq:fppL6}, (ii), (iii) and the inequality \eqref{eq:AP:LEt0} of Assumption \ref{ap:2}. we obtain the assertion (iv).

		\item[(v)]
		We formally differentiate \eqref{eq:CH00} in time to derive
		\begin{equation}\label{eq:CH000}
		\phi_{ttt}+ \varepsilon \Delta^2 \phi_{tt}
		=\dfrac{1}{\varepsilon}\Delta \left( f''(\phi)(\phi_t)^2+f'(\phi)\phi_{tt} \right). \\
		\end{equation}
		Testing \eqref{eq:CH000} with $\Delta^{-2} \phi_{tt}$, we obtian
		\begin{equation}\label{eq:CH31}
		\begin{split}
		& \frac{1}{2} \frac{d}{ dt } \| \Delta^{-1}\phi_{tt}\|^2 + \varepsilon \| \phi_{tt} \|^2
		=\dfrac{1}{\varepsilon} \left( f''(\phi)(\phi_t)^2+f'(\phi)\phi_{tt},  \Delta^{-1}\phi_{tt}\right)\\
		\leq{}&\dfrac{\varepsilon}{2} \|f''(\phi)\|^2_{L^2} \|\phi_t\|_{L^6}^{4} +
		\dfrac{1}{2\varepsilon^3} \|\Delta^{-1} \phi_{tt}\|^2_{L^6}
		+\frac{1}{2\varepsilon^3}\|f'(\phi)\|_{L^{3}}^2\|\Delta^{-1} \phi_{tt}\|_{L^6}^2
		+\frac{\varepsilon}{2} \|\phi_{tt}\|^2 \\
		\le{}& \dfrac{\varepsilon}{2} C_s^4\|f''(\phi)\|^2_{L^2} \|\nabla \phi_t\|^{4}
		+ \dfrac{C_s^2}{2\varepsilon^3}\| \phi_{tt}\|^2_{-1}
		+\frac{C_s^2}{2\varepsilon^3} \|f'(\phi)\|_{L^{3}}^2
		\| \phi_{tt}\|_{-1}^2
		+\frac{\varepsilon}{2} \|\phi_{tt}\|^2. \\
		\end{split}
		\end{equation}
		After taking integration from $[0,T]$ and taking maximum for terms
		depending on $T$, we have
		\begin{equation}\label{eq:CH32}
		\begin{split}
		& \esssup \limits_{t\in[0,\infty]}  \| \Delta^{-1}\phi_{tt}\|^2 + \varepsilon\int_{0}^{\infty} \| \phi_{tt} \|^2 {\rm d} t\\
		\lesssim{} & {\varepsilon}  \esssup \limits_{t\in[0,\infty]} \left(\|f''(\phi)\|^2_{L^2} \|\nabla \phi_t\|^{2}\right)
		\int_{0}^{\infty} \|\nabla \phi_t\|^{2}{\rm d} t \\
		&+ \frac{1}{\varepsilon^3}
		\left(\esssup_{t\in[0,\infty]} \|f'(\phi)\|_{L^{3}}^2+1\right)
		\int_{0}^{\infty} \| \phi_{tt}\|^2_{-1}{\rm d} t
		+ \| \Delta^{-1}\phi_{tt}^0\|^2. \\
		\end{split}
		\end{equation}
		The assertion then follows from \eqref{eq:fpL3}, the following estimate
		\begin{equation}\label{eq:fppL2}
		\|f''\|_{L^{2}}^2 \lesssim \tilde{c}_4\| \phi \|_{L^{2(p-3)^+}}^{(p-3)^+} + \tilde{c}_5 
		\lesssim \|\phi\|_1^{2(p-3)^+} 
		\lesssim \varepsilon^{-(\sigma_1+1)(p-3)^+},
		\end{equation}
		(ii), (iv) and the inequality \eqref{eq:AP:httHn2} of Assumption \ref{ap:2}.

		\item[(vi)]
		Pairing \eqref{eq:CH000} with $-\Delta^{-3} \phi_{ttt}$, we obtain
		\begin{equation}\label{eq:CH35}
		\begin{split}
		&\|\Delta^{-1}\phi_{ttt}\|_{-1}^2
		+ \frac{\varepsilon}{2}\frac{d}{d t}\| \phi_{tt}\|_{-1}^2\\
		= &-\dfrac{1}{\varepsilon}\left( f''(\phi)(\phi_t)^2+f'(\phi)\phi_{tt}, \Delta^{-2} \phi_{ttt} \right)\\
		\leq&  \dfrac{C_s^2}{\varepsilon^2} \left( \|f''(\phi)\|_{L^2}^2 \|\phi_{t}\|_{L^6}^4
		+ \|f'(\phi)\|_{L^3}^2 \|\phi_{tt}\|^2 \right)
		+\frac{1}{2C_s^2}\|\Delta^{-2}\phi_{ttt}\|_{L^6}^2\\
		\leq&  \dfrac{C_s^2}{\varepsilon^2} \left( C_s^4 \|f''(\phi)\|_{L^2}^2 \|\nabla \phi_{t}\|^4
		+\|f'(\phi)\|_{L^3}^2  \|\phi_{tt}\|^2 \right)
		+\frac{1}{2}\|\Delta^{-1}\phi_{ttt}\|_{-1}^2.\\
		\end{split}
		\end{equation}
		Integrating \eqref{eq:CH35} from  $[0,\infty)$, we have
		\begin{equation}\label{eq:CH36}
		\begin{split}
		& \int_{0}^{\infty} \|\Delta^{-1}\phi_{ttt}\|_{-1}^2 {\rm d} t
		+ \esssup \limits_{t\in[0,\infty]} \varepsilon \| \phi_{tt}\|_{-1}^2\\
		\leq&  \dfrac{2}{\varepsilon^2} C_s^6 \esssup \limits_{t\in[0,\infty]}
		\left(\|f''(\phi)\|^2_{L^2}\|\nabla \phi_t\|^{2} \right)
		\int_{0}^{\infty} \|\nabla \phi_t\|^{2} {\rm d} t\\
		& + \dfrac{2C_s^2}{\varepsilon^2}  \esssup \limits_{t\in[0,\infty]} \|f'(\phi)\|_{L^3}^2 \int_{0}^{\infty} \|\phi_{tt}\|^2  {\rm d} t
		+\varepsilon \| \phi_{tt}^0\|_{-1}^2.\\
		\end{split}
		\end{equation}
		The assertion then follows from \eqref{eq:fppL2}, \eqref{eq:fpL3}, (ii), (iv), (v) and the inequality \eqref{eq:AP:httHn1} of Assumption \ref{ap:2}.
		
	\end{enumerate} \qed
	
\end{proof}

\bibliographystyle{plain}  
\bibliography{LSS2}

\end{document}